\numberwithin{equation}{section}
\newcommand{\be}{\begin{eqnarray}}
\newcommand{\ee}{\end{eqnarray}}
\newcommand{\ce}{\begin{eqnarray*}}
\newcommand{\de}{\end{eqnarray*}}
\newtheorem{theorem}{Theorem}[section]
\newtheorem{lemma}[theorem]{Lemma}
\newtheorem{remark}[theorem]{Remark}
\newtheorem{definition}[theorem]{Definition}
\newtheorem{proposition}[theorem]{Proposition}
\newtheorem{Examples}[theorem]{Example}
\newtheorem{corollary}[theorem]{Corollary}
\def\wt{\widetilde}
\def\eps{\varepsilon}
\def\e{\mathrm{e}}
\def\p{\partial}
\def\[{{\Big[}}
\def\]{{\Big]}}
\def\<{{\langle}}
\def\>{{\rangle}}
\def\({{\Big(}}
\def\){{\Big)}}
\def\bx{{\mathbf{x}}}
\def\dif{{\mathord{{\rm d}}}}
\def\no{\nonumber}
\def\={&\!\!=\!\!&}
\def\cB{{\mathcal B}}
\def\cC{{\mathcal C}}
\def\cL{{\mathcal L}}
\def\cP{{\mathcal P}}
\def\mD{{\mathbb D}}
\def\mE{{\mathbb E}}
\def\mI{{\mathbb I}}
\def\mN{{\mathbb N}}
\def\mP{{\mathbb P}}
\def\mQ{{\mathbb Q}}
\def\mR{{\mathbb R}}
\def\1{{\mathbf{1}}}
\def\sB{{\mathscr B}}
\def\sF{{\mathscr F}}
\def\sL{{\mathscr L}}
\def\sM{{\mathscr M}}
\def\geq{\geqslant}
\def\leq{\leqslant}
\def\div{\mathord{{\rm div}}}
\def\eps{\varepsilon}
\def\e{\mathrm{e}}
\def\p{\partial}
\def\[{{\Big[}}
\def\]{{\Big]}}
\def\<{{\langle}}
\def\>{{\rangle}}
\def\({{\Big(}}
\def\){{\Big)}}
\def\bx{{\mathbf{x}}}
\def\dif{{\mathord{{\rm d}}}}
\def\no{\nonumber}
\def\={&\!\!=\!\!&}
\def\bt{\begin{theorem}}
\def\et{\end{theorem}}
\def\bl{\begin{lemma}}
\def\el{\end{lemma}}
\def\br{\begin{remark}}
\def\er{\end{remark}}
\def\bx{\begin{Examples}}
\def\ex{\end{Examples}}
\def\bd{\begin{definition}}
\def\ed{\end{definition}}
\def\bp{\begin{proposition}}
\def\ep{\end{proposition}}
\def\bc{\begin{corollary}}
\def\ec{\end{corollary}}
\def\geq{\geqslant}
\def\leq{\leqslant}
\def\KK{q}
\def\div{\mathord{{\rm div}}}
\def\ff{\frac}   \def\ff{\frac}
\def\<{\langle} \def\>{\rangle}
\begin{document}

\title[Ergodicity of supercritical  SDEs  driven by $\alpha$-stable processes]
{Ergodicity of supercritical  SDEs  driven by $\alpha$-stable processes and heavy-tailed sampling}
\date{}

\author{Xiaolong Zhang,\ \ Xicheng Zhang}

\address{Xiaolong Zhang:
School of Mathematics and Statistics, Wuhan University,
Wuhan, Hubei 430072, P.R.China\\
Email: zhangxl.math@whu.edu.cn}

\address{Xicheng Zhang:
School of Mathematics and Statistics, Wuhan University,
Wuhan, Hubei 430072, P.R.China\\
Email: XichengZhang@gmail.com
}

\thanks{
This work is partially supported by NNSFC grants of China (Nos. 12131019, 11731009), and the German Research Foundation (DFG) through the Collaborative Research 
Centre(CRC) 1283 ``Taming uncertainty and profiting from randomness and low regularity in analysis, stochastics and their applications".
}

\begin{abstract}
Let $\alpha\in(0,2)$ and $d\in\mathbb{N}$. Consider the following stochastic differential equation (SDE) driven by $\alpha$-stable process in $\mathbb{R}^d$:
$$
\dif X_t=b(X_t)\dif t+\sigma(X_{t-})\dif L^{\alpha}_t, \quad X_0=x\in\mathbb{R}^d,
$$
where $b:\mathbb{R}^d\to\mathbb{R}^d$ and $\sigma:\mathbb{R}^d\to\mathbb{R}^d\otimes\mathbb{R}^d$ are locally $\gamma$-H\"older continuous  with $\gamma\in(0\vee(1-\alpha)^+,1]$, 
$L^\alpha_t$ is a $d$-dimensional rotationally invariant $\alpha$-stable process. Under some dissipative and non-degenerate assumptions on $b,\sigma$, 
we show the $V$-uniformly exponential ergodicity for the semigroup $P_t$ associated with $\{X_t(x),t\geq 0\}$. Our proofs are mainly based on the heat kernel estimates recently established in \cite{MZ20}
through showing the strong Feller property and the irreducibility of $P_t$. It is interesting that when $\alpha$ goes to zero, the diffusion coefficient 
$\sigma$ can grow faster than drift $b$. 
As applications, we put forward a new heavy-tailed sampling scheme.

\bigskip
\noindent
\textbf{Keywords}:
$\alpha$-stable processes, Ergodicity, Strong Feller property, Irreducibility, Heavy-tailed distribution.\\

\end{abstract}

\maketitle \rm


\section{Introduction}

Let $\pi(x)\varpropto \e^{-U(x)}$ be a probability density function, where $U:\mR^d\to\mR$ is a potential function. 
The classical Langevin Monte Carlo (LMC) method is to use the Langevin diffusion processes to approximate the target distribution $\mu(\dif x)=\pi(x)\dif x$.
More precisely, consider the following stochastic differential equations (SDEs):
\begin{align}\label{SDE6}
\dif X_t=-\nabla U(X_t)\dif t+\sqrt{2}\dif B_t,\ \ X_0=x,
\end{align}
where $(B_t)_{t\geq 0}$ is a $d$-dimensional standard Brownian motion. 
Under some regularity assumptions on $U$, it is well known that the above SDE admits a unique solution $(X_t)_{t\geq 0}$ and 
$\mu$ is an invariant measure of the semigroup $P_t$ generated by $X_t(x)$.
Moreover, under the following dissipativity assumption: for some $c_0,c_1>0$,
$$
\<x, \nabla U(x)\>\geq c_0|x|^2-c_1,
$$
it is also well known that the law of $X_t$ exponentially converges to the stationary distribution $\mu$ in some sense 
as $t\to\infty$ (cf. \cite{BEL18, MT09}), i.e., $P_t$ is exponentially ergodic. 
In particular, it provides a way to generate samples from $\mu$ by the Euler discretization for SDE \eqref{SDE6} (cf. \cite{BEL18}). 
Nowadays, there are numerous works about the analysis of the performance of LMC algorithms (see \cite{DK19, DT12, GC11} and references therein).

\medskip

On the other hand, in statistics physics, 
the Langevin diffusion $X_t$  represents the position of a particle at time $t$ which is influenced by some random force, where the random force is usually the sum of many i.i.d. random pulses with {\it finite} variance. 
By the central limit theorem, the sum of the pulses
converges to a Gaussian distribution. While, if the random pulses have {\it infinite} variance, then the sum of the pulses would converge to an $\alpha$-stable distribution, a typical class of heavy-tailed distribution.
Moreover, it was already shown in \cite{Rob96} that for $d=1$ and $U(x)=|x|^\beta$, the diffusion process defined by SDE \eqref{SDE6} is exponentially ergodic if and only if $\beta\geq 1$.
Therefore, using continuous Langevin diffusion \eqref{SDE6} to simulate the heavy-tailed distribution is not any more a good choice. 
Thus, instead of SDE \eqref{SDE6}, it is quite natural to consider the following SDE:
\begin{align}\label{SDE7}
\dif X_t=b(X_t)\dif t+\dif L^\alpha_t,\ \ \alpha\in(0,2),
\end{align}
where $(L^\alpha_t)_{t\geq0}$ is a $d$-dimensional rotationally invariant $\alpha$-stable process with generator $\Delta^{\frac{\alpha}{2}}$, the usual fractional Laplacian, and
$$
b(x)=-\frac{\Gamma(\frac{d-2+\alpha}{2})}{\pi^{\frac{d}{2}}2^{\frac{2-\alpha}{2}}\Gamma(\frac{2-\alpha}{2})}\e^{U(x)}\int_{\mR^d}\frac{\e^{-U(y)}\nabla U(y)}{|x-y|^{d-(2-\alpha)}}\dif y=\e^{U(x)}\Delta^{\frac{\alpha-2}{2}}(\nabla\e^{-U})(x),
$$
where $d\geq 2$ and $\Delta^{\frac{\alpha-2}{2}}$ is the Riesz potential (cf. \cite[page117]{S79}). Formally, one sees that
$$
\Delta^{\frac{\alpha}{2}}\e^{-U}-\div(b\e^{-U})\equiv 0,
$$
which implies that $\mu$ is an invariant measure of SDE \eqref{SDE7}.
Rigorous theoretical results for the exponential ergodicity of the above SDE \eqref{SDE7} were given in \cite[Theorem 1.1]{HMW21}, and therein,
some explicit conditions on $U$ are provided.
This approach called Fractional Langevin Monte Carlo was firstly introduced in \cite{S17, NSR19}. It has been demonstrated to be useful in modern machine learning both in optimization and heavy-tailed sampling \cite{GSZ21, SZTG20, YZ18}.

\medskip

Motivated by the above mentioned works, in this paper we consider the following SDE driven by multiplicative $\alpha$-stable noises:
\begin{align}
\label{sde:main}
\dif X_t=b(X_t)\dif t+\sigma(X_{t-})\dif L^{\alpha}_t,\ \  X_0=x,
\end{align}
where  $b:\mathbb{R}^d\to\mathbb{R}^d$ and $\sigma:\mathbb{R}^d\to\mathbb{R}^d\otimes\mathbb{R}^d$ are two Borel measurable functions. 
The main contributions of this paper are two-folds:

\begin{enumerate}[(i)]
\item Under some dissipativity, non-degeneracy and locally H\"older regularity assumptions on $b$ and $\sigma$, we establish 
the exponential ergodicity of SDE \eqref{sde:main} for all $\alpha\in(0,2)$, which, in particular, covers the supercritical regime $\alpha\in(0,1)$. 

\item For a large class of heavy-tailed distributions, we put forward a new ergodicity SDE driven by multiplicative $\alpha$-stable 
processes to simulate it. Compared with the recent work \cite{HMW21}, our conditions on the potential functions are simpler and easier to realize in the computer.
\end{enumerate}

Now we make the following assumptions on $b$ and $\sigma$:
\begin{enumerate}[{\bf (H$_{\rm loc}$)}]
\item For any $m\in\mN$, there is a constant $C_m\geq 1$ such that for all $|x|\leq m$ and $\xi\in\mR^d$,
$$
C_m^{-1}|\xi|\leq |\sigma(x)\xi|\leq C_m|\xi|.
$$
Moreover, there are $\gamma\in((1-\alpha)^+,1]$ and locally bounded measurable functions $\ell_1$ and $\ell_2$ on $\mR^d$ such that for all $x\in\mR^d$ and $|h|\leq 1$,
\begin{align}\label{Hol}
|b(x+h)-b(x)|\leq \ell_1(x)|h|^\gamma,\ |\sigma(x+h)-\sigma(x)|\leq \ell_2(x)|h|^\gamma.
\end{align}
\end{enumerate}
\begin{enumerate}[{\bf (H$^{r,\KK}_{\rm glo}$)}]
\item For given $r>-\alpha$, there are $\eps_0\in(0,1]$ small enough and $c_0, c_1>0$ such that for all $|x|>1$,
\begin{align}\label{BB1}
\<x,b(x)\>+\eps_0\ell_1(x)|x|+\KK (\|\sigma(x)\|+\eps_0\ell_2(x))^\alpha|x|^{2-\alpha}\leq -c_0|x|^{2+r}+c_1,
\end{align}
where $\KK$ is defined by \eqref{KK} below.
\end{enumerate}
\br\rm\label{Re8}
Notice that \eqref{Hol} is equivalent to the locally $\gamma$-H\"older continuity of $b$ and $\sigma$.
If $b,\sigma$ are locally Lipschitz, i.e. $\gamma=1$ in \eqref{Hol}, then one can take $\eps_0=0$ in \eqref{BB1}. When $\gamma<1$, in order to adopt the stopping time to localize the coefficients, we need to use the pathwise uniqueness 
of strong solutions.
Thus we must mollify the coefficients and require $\eps_0>0$ in \eqref{BB1} to have some uniform estimates for the approximation coefficients.
Examples of $b,\sigma$ satisfying {\bf (H$^{r,\KK}_{\rm glo}$)} shall be provided below. In particular, $b$ and $\sigma$ are allowed to be polynomial growth. It is interesting that when $\alpha\in(0,1)$, $\sigma$ can grow faster than $b$ (see Example \ref{Ex13} below).
\er

Under the above assumptions, it is well known that there is a unique weak solution to SDE \eqref{sde:main} (see \cite{CZZ21}).
The semigroup associated with the Markov process $X_t(x)$ is then defined by
$$
P_tf(x):=\mathbb{E}f(X_t(x)),\ \ f\in C_b(\mR^d).
$$
By It\^o's formula, one sees that the generator of $P_t$ is given by
\begin{align}\label{gen}
\mathscr{L}f(x):=\mathcal{L}_\sigma f(x)+\langle b(x), \nabla f(x)\rangle,
\end{align}
with
\begin{align}\label{CL1}
\mathcal{L}_\sigma f(x)=\frac{\alpha 2^\alpha\Gamma(\frac{d+\alpha}{2})}{\Gamma(\frac d2)\Gamma(\frac{2-\alpha}2)}\int_{\mathbb{R}^d}[f(x+\sigma(x)z)+f(x-\sigma(x)z)-2f(x)]\dif z/|z|^{d+\alpha}.
\end{align}
In particular, 
\begin{align}\label{CL11}
\Delta^{\frac\alpha2} f(x)=\frac{\alpha 2^\alpha\Gamma(\frac{d+\alpha}{2})}{\Gamma(\frac d2)\Gamma(\frac{2-\alpha}2)}\int_{\mathbb{R}^d}[f(x+z)+f(x-z)-2f(x)]\dif z/|z|^{d+\alpha}.
\end{align}
We call SDE \eqref{sde:main} being supercritical for $\alpha\in(0,1)$ since in this case, from the view point of PDE, the drift term plays a dominant role compared with 
the diffusion term $\cL_\sigma$ (see \cite{CZZ21}).

\medskip

Our main result of this paper is 
\bt\label{Main}
Let $r>-\alpha$ and $p\in((-r)\vee 0,\alpha)$. Define $V_p(x):=1+|x|^p$. Suppose that {\bf (H$_{\rm loc}$)} and {\bf (H$^{r,\KK}_{\rm glo}$)} hold.
Then there is a unique invariant probability measure $\mu$ associated with $(P_t)_{t\geq 0}$. Moreover, we have

(i) If $r\geq 0$, then there are $\lambda, C_p>0$ such that for all $t>0$ and $x\in\mR^d$,
$$
\sup_{\|\varphi/V_p\|_\infty\leq 1}|P_t\varphi(x)-\mu(\varphi)|\leq C_p\e^{-\lambda t}V_p(x).
$$

(ii) If $r>0$, then there are $\lambda, C>0$ such that for all $t>0$,
$$
\sup_{x\in\mR^d}\|P_t(x.\cdot)-\mu\|_{\rm Var}\leq C\e^{-\lambda t}.
$$
\et

Below we provide two examples to illustrate our main result.
\bx\rm\label{Ex13}
The following example shows that $b$ and $\sigma$ can be polynomial decay and growth:
$$
b(x)=-x(1+|x|^2)^{\beta/2},\ \ \sigma(x)=(1+|x|^2)^{\gamma/2}\mI,\ \ \beta\in(-\alpha,\infty),\ \gamma\in(-\infty,1+\tfrac\beta\alpha).
$$
For $|x|>1$, by Young's inequality, we have
\begin{align*}
&\<x,b(x)\> +\KK \|\sigma(x)\|^\alpha|x|^{2-\alpha}=-|x|^2(1+|x|^2)^{\frac\beta2}+\KK(1+|x|^2)^{\frac{\alpha\gamma}2}|x|^{2-\alpha}\\ 
&\qquad\leq-2^{\frac\beta 2\wedge 0}|x|^{2+\beta}+2^{\frac {\alpha(\gamma\vee 0)}{2}}q|x|^{2-\alpha+\alpha(\gamma\vee 0)}\leq -2^{(\frac\beta 2\wedge 0)-1}|x|^{2+\beta}+c_1.
\end{align*}
In particular, when $\alpha>1$, $\beta$ can be less than $-1$ so that $b$ is polynomial decay. Moreover, for fixed $\beta>0$, when $\alpha\downarrow 0$, $\gamma$ can go to infinity.
In other words, $\sigma$ can grow faster than $b$. This is not surprise since one can image that for the SDE driven by compound Poisson processes (in some sense, corresponding to $\alpha=0$), $\sigma$ can be
arbitrarily growth.
\ex
\bx\rm
The following example shows that $b$ and $\sigma$ can even be exponentially growth:
$$
b(x)=-x\e^{|x|},\ \ \sigma(x)=\e^{|x|/\beta}\mI,\ \ \beta\in(\alpha,2).
$$
It is easy to see that for $|h|\leq 1$,
$$
|b(x+h)-b(x)|\leq (1+|x|)\e^{|x|+1}|h|=:\ell_1(x)|h|,
$$
and
$$
|\sigma(x+h)-\sigma(x)|\leq \e^{(|x|+1)/\beta}|h|=:\ell_2(x)|h|.
$$
Thus for any $|x|>1$ and $q>1$, by Young's inequality, one can choose $\eps_0$ small enough so that
\begin{align*}
&\<x,b(x)\>+\eps_0\ell_1(x)|x|+\KK (\|\sigma(x)\|+\eps_0\ell_2(x))^\alpha|x|^{2-\alpha}\\
&\quad=-|x|^2\e^{|x|}+\eps_0(1+|x|)|x|\e^{|x|+1}+\KK(\e^{|x|/\beta}+\eps_0\e^{(|x|+1)/\beta})^\alpha|x|^{2-\alpha}\\ 
&\quad\leq-|x|^2\e^{|x|}+\tfrac14|x|^2\e^{|x|+1}+2\KK\e^{\alpha|x|/\beta}|x|^{2-\alpha}\leq -\tfrac12|x|^2\e^{|x|}+c_1.
\end{align*}
Still, when $\alpha\downarrow 0$, the diffusion $\sigma$ can grow faster than drift $b$.
\ex

When the driven noise is Brownian motion, 
the ergodicity of SDEs \eqref{sde:main} has been studied extensively in the literature \cite{BGL14, Cer01, Da92,  W06,  XZ20}, etc., 
and the dissipativity condition \eqref{BB1} 
usually takes the following form (for example, see \cite[(7.1)]{XZ20}):
$$
2\<x,b(x)\>+\|\sigma(x)\|^2\leq-c_0|x|^2+c_1.
$$
In recent years, there also has been a great interest in the study of the ergodicity for SDEs driven by L\'evy processes, see \cite{K09,LMW21,Wang16,XZ20} and the references therein. 
To the best of the authors' knowledge, there exist at least three well developed methods for studying the exponential ergodicity of a Markov process defined through an SDE: 
Coupling method (cf. \cite{HMW21, K09, LW20, LMW21, LW19, M17}), Functional inequality method (see \cite{BGL14, C05, W06}), 
Meyn-Tweedie's minorization condition (cf. \cite{GM06,M07, MSH02, MT09}). 
Each method has its own merits and may be adapted to different situations.  For the coupling method, in order to construct a successful coupling it is usually assumed that the following  one-sided monotone 
condition holds (see \cite{LW19}): for some $R>0$,
\begin{equation}
\langle  x-y, \nabla U(x)-\nabla U(y)\rangle\geq
\begin{cases}
 -L|x-y|^2,\quad & |x-y|<R,\\
K|x-y|^2, \quad  & |x-y|\geq R.
\end{cases}
\label{dri:dis}
\end{equation}
While, for the functional inequality method, it is usually assumed that the potential function $U$ is at least $C^2$-smooth (see \cite{BGL14, W06}). 
The obvious advantage of this method is that the quantitative convergence rate can be calculated from the parameters.
Finally, Meyn-Tweedie's approach is based on the Lyapunov estimates together with verifying the strong Feller property and irreducibility for the associated semigroup.

\smallskip

To prove our main theorem, we adopt the Meyn-Tweedie method. As we know,
the strong Feller property reflects the regularization effects of the noise, which is usually related to the continuity of the transition density of a Markov process with respect to the starting point.
There are many ways to establish the  strong Feller property, e.g., the gradient estimates, Harnack's inequalities, etc.. For the irreducibility, when the driving noise is Brownian motion, there are several ways for choices: 
by solving  a control problem \cite{MSH02}, by Girsanov's transformation \cite{XZ20}, by the classical supported theorem of SDE \cite{C07}. 
However, when the driven noise is an $\alpha$-stable noise, the above method for irreducibility does not work any more.
In \cite{XZ20}, we directly verify the positivity of the Dirichlet heat kernel by localization method. Therein, it is restricted to $\alpha\in(1,2)$ since in this case, 
the heat kernel of the operator \eqref{gen} is comparable with the one of the fractional Laplacian $\Delta^{\frac\alpha 2}$.
In a recent work \cite{MZ20}, when the conditions in {\bf (H$_{\rm loc}$)} holds globally, i.e., the constant $C_m$ does not depend on $m$ and $\ell_1(x),\ell_2(x)$ are constants, 
we obtain the following two-sided estimates for the density of SDE \eqref{sde:main}: for any $T>0$ and some $C>1$, and for all $t\in(0,T]$ and $x,y\in\mR^d$,
\begin{align}
\label{2sest:ubd}
C^{-1} t(t^{\frac{1}{\alpha}}+|\theta_t(x)-y|)^{-d-\alpha}\leq p(t,x,y)\leq C t(t^{\frac{1}{\alpha}}+|\theta_t(x)-y|)^{-d-\alpha},
\end{align}
where $\theta_t(x)$ solves the following regularized ODE:
\begin{align}\label{TT9}
\dot\theta_t(x)=b*\phi_{t^{1/\alpha}}(\theta_t(x)),\ \ \theta_0(x)=x,
\end{align}
where $(\phi_\eps)_{\eps>0}$ is a family of standard mollifiers. The above estimates still allow us to show the irreducibility of $P_t$ by localization.
For the existence of invariant measures, we use the standard Krylov-Bogoliubov's theorem by showing some Lyapunov's type estimate.

\smallskip

The paper is organized as follows. In Section 2, we recall some basic notions about the ergodicity of Markov processes. These notions are standard in literature (see \cite{Da92,Rev99}). Moreover,
we also prove a general criterion for the irreducibility of a Markov process in terms of the heat kernel estimates, which is motivated by the works of \cite{Chu95,XZ20}. 
In Section 3, we prove our main result Theorem \ref{Main}. For the existence of invariant measures, by the classical Krylov-Bogoliubov's  method, we show the Lyapunov type estimates for $X_t(x)$.
For the uniqueness of invariant measure as well as the exponential ergodicity, we show the strong Feller property and the irreducibility of $P_t$ by the heat kernel estimates 
and suitable localization and smoothing arguments. In Section 4, we present an application of our main result to the heavy-tailed sampling. In Appendix, the proof of weak convergence for mollifying SDEs
is provided for readers' convenience.

\section{Preliminaries}

In this section we prepare some notions and criterion about the ergodicity and irreducibility of general Markov processes in Euclidean spaces. 
Fix $N\in\mN$. 
Let $\mD$ be the space of all c\'adl\'ag functions from $[0,\infty)$ to $\mR^N$, which is endowed with the Skorokhod topology so that $\mD$ becomes a Polish space. Let $X_t(\omega):=\omega_t$ be the coordinate process over $\mD$. For $s>0$, let $\theta_s:\mD\to\mD$ be the shift operator:
$$
\theta_s(\omega)(t)=\omega(t+s),\ \ t\geq 0.
$$
Let $(\mP_x)_{x\in\mR^N}$ be a family of probability measures over $\mD$ so that $\sM:=\{(X_t)_{t\geq 0},(\mP_{x})_{x\in\mR^N}\}$ forms a family of Markov processes with regard to the natural filtration $(\sF_t)_{t\geq 0}$.
More precisely,

(i) For each $x\in\mR^N$, $\mP_x(X_0=x)=1$.

(ii) For each $A\in\sB(\mD)$, $x\mapsto\mP_x(A)$ is Borel measurable.

(iii) For any $s<t$ and $A\in\sB(\mD)$, it holds that
\begin{align}\label{Ma1}
\mP_x(A\circ \theta_s|\sF_s)=\mP_{X_s}(A).
\end{align}
Let $\cB_b(\mR^N)$ be the space of all bounded measurable functions over $\mR^N$.  For $t\geq 0$ and $f\in\cB_b(\mR^N)$, the semigroup associated with $\sM$ is defined by
$$
P_tf(x):=\mE_x f(X_t),\ \ x\in\mR^N,
$$
where $\mE_x$ stands for the expectation with respect to $\mP_x$. 

The following notions are standard and can be found in \cite{Da92,MT09}.
\begin{enumerate}[$\bullet$]
\item A probability measure $\mu\in\cP(\mR^N)$ is called an invariant probability measure of $(P_t)_{t\geq 0}$, if
$$
\mu(P_tf)=\mu(f),\ \ \forall t\geq 0,\ f\in\cB_b(\mR^N). 
$$
\item We call $(P_t)_{t\geq 0}$ ergodic, if there exists a unique invariant probability measure $\mu$ of $(P_t)_{t\geq 0}$, equivalently, for any $x\in\mR^N$ and $f\in\cB_b(\mR^N)$,
$$
\lim_{t\to\infty}\frac{1}{t}\int^t_0P_sf(x)\dif s=\mu(f).
$$
\item Let $V:\mR^N\to[1,\infty)$ be a measurable function. We call $(P_t)_{t\geq 0}$ be $V$-uniformly exponential ergodic if there are constants $C,\lambda>0$ 
and an invariant probability measure $\mu$ such that
$$
\sup_{\|\phi/V\|_\infty\leq 1}|P_t\phi(x)-\mu(\phi)|\leq C\e^{-\lambda t} V(x),\ \ t\geq 0,\ x\in\mR^N.
$$
In particular, if $V\equiv1$, then we call $(P_t)_{t\geq 0}$ be uniformly exponentially ergodic.
\item $(P_t)_{t\geq 0}$ is said to be strong Feller, if $P_tf\in C_b(\mathbb{R}^N)$ for any $t>0$ and $f\in\mathcal{B}_b(\mathbb{R}^N)$.
\item $(P_t)_{t\geq 0}$ is called irreducible, if for any $t>0$, open ball $B$ and $x\in\mathbb{R}^N$, $P_t\mathbf{1}_B(x)>0$.
\end{enumerate}

The following general result is taken from \cite[Theorem 2.5]{GM06}.
\bt\label{Th11}
Suppose that $(P_t)_{t\geq 0}$ is strong Feller and irreducible. If there are $p,\lambda>0$ and $C_1, C_2>0$ such that
$$
\mE_x|X_t|^p\leq C_1|x|^p\e^{-\lambda t}+C_2,\ \   t\geq 0,\ x\in\mR^N,
$$
then $(P_t)_{t\geq 0}$ is $V$-uniformly ergodic with $V(x)=1+|x|^p$. If there are $p, T_0, C_3>0$ such that
$$
\mE_x|X_t|^p\leq C_3,\ \   t\geq T_0,\ x\in\mR^N,
$$
then $(P_t)_{t\geq 0}$ is uniformly exponentially ergodic.
\et

Next we present a general criterion for irreducibility in terms of the heat kernel estimates. We believe that it could be used to other cases.

Let $\rho: (0,\infty)\times(0,\infty)\to(0,\infty)$ be a continuous function with the properties that
for any $t>0$, $r\mapsto \rho(t,r)$ is decreasing on $(0,\infty)$, and
for any $0<\delta<T<\infty$, 
\begin{align}\label{DD4}
\sup_{t\in[\delta,T]}\sup_{r>0}\rho(t,r)<\infty.
\end{align}
In the following we make the following assumptions about the Markov process $\sM$:
\begin{enumerate}[{\bf (A$_1$)}]
\item[{\bf (A$_1$)}] Suppose that for each $t>0$, with respect to $\mP_x$, $X_t$ admits a family of transition probability density function $p(t,x,y)$ in $\mR^N$ so that
\begin{align}\label{DD1}
\mathbb{P}_x(X_t\in A)=\int_Ap(t,x,y)\dif y,\ \ \forall A\in\sB(\mR^N),\ t>0,\ x\in\mR^N.
\end{align}
Moreover, $t\mapsto X_t$ is stochastically continuous, equivalently, 
\begin{align}\label{DD11}
\mP_x(X_t\not= X_{t-})=0,\ \ \forall t>0, x\in\mR^N.
\end{align}

\item[{\bf (A$_2$)}] We suppose that for any $T>0$, there are constants $\lambda_0, C_0\geq 1$ such that for any $t\in(0,T]$ and $x\in\mR^N$,
and for Lebesgue almost all $y\in\mR^N$,
\begin{align}\label{DD2}
C_{0}^{-1}\rho(t,\lambda_0\Gamma_t(x,y))\leq p(t,x,y)\leq C_{0} \rho(t,\Gamma_t(x,y)/\lambda_0),
\end{align}
where $\Gamma_t(x,y):(0,\infty)\times\mR^N\times\mR^N\to(0,\infty)$ is a measurable function, and for each $y\in\mR^N$, 
$$
\mbox{$(t,x)\mapsto \Gamma_t(x,y)$ is continuous.}
$$
\end{enumerate}
\br\rm
In {\bf (A$_2$)}, for the standard Gaussian case, one usually takes $\rho(t,r)=t^{-d/2}\e^{-r^2/t}$ and $\Gamma_t(x,y)=|x-y|$.
For the $\alpha$-stable case, if $\alpha\in[1,2)$, one takes $\rho(t,r)=t(t^{1/\alpha}+r)^{-d-\alpha}$ and $\Gamma_t(x,y)=|x-y|$;
and if $\alpha\in(0,1)$, one  takes $\Gamma_t(x,y)=|x-\theta_t(y)|$, where $\theta_t(y)$ is the ODE flow associated with drift $b$ (see \eqref{TT9} above).
We would like to emphasize that we do not make any continuity assumption about $y\mapsto p(t,x,y)$. Usually, the regularity of the density requires more regular coefficients
in the theory of SDEs.
\er

Let $D$ be an open subset of $\mathbb{R}^N$.  The exit time of $X_t$ from $D$ is defined by
$$
\tau_D:=\inf\big\{t>0,X_t\notin D\big\}.
$$

We have the following general result.
\bt\label{41}
Let $D_0\Subset D$ be two connected  domains of $\mR^N$ so that $D$ has Lebesgue-zero measure boundary. Suppose that {\bf (A$_1$)} and {\bf (A$_2$)} hold, 
and the above $\Gamma_t(x,y)$ and $\rho(t,r)$ satisfy the following conditions: 
there are $t_0,\delta_0>0$ small enough and functions $\ell_0(t)$, $\ell_1(t)$ defined on $(0,t_0]$ such that for all $t\in(0,t_0]$, $y_0\in D_0$, $x,y\in B_{\delta_0}(y_0)\subset D_0$ and $x'\notin D$,
\begin{align}\label{AA01}
\Gamma_{t}(x,y)\leq\ell_0(t),\ \Gamma_{t}(x',y)\geq\ell_1(t),
\end{align}
and for the $C_0,\lambda_0$ in \eqref{DD2}, $t\mapsto\rho(t,\ell_1(t)/\lambda_0)$ is increasing on $(0,t_0]$, and 
\begin{align}\label{AA2}
C_0^{-1}\rho\big(t,\lambda_0\ell_0(t)\big)\geq 2C_0\rho\big(t,\ell_1(t)/\lambda_0).
\end{align}
Then for any $x_0,y_0\in D_0$ and $r_1,r_2<{\rm dist}(D_0,D^c)$,
\begin{align}\label{kill:sp}
\inf_{x\in B_{r_1}(x_0)}\mP_x\Big(X_t\in B_{r_2}(y_0); t<\tau_D\Big)\geq c_0>0,
\end{align}
where $c_0$ only depends on $\rho,\lambda_0,  C_0, t_0, \delta_0$ and $r_1,r_2$, $x_0,y_0, D_0, D$.
\et
\begin{proof}
Let $t_0$ and $\delta_0$ be as in the assumptions. We divide the proofs into two steps.

(i) In this step we show that for all $y_0\in D_0$, $\delta\in(0,\frac{\delta_0}2]$ and $t\in(0,t_0]$,
\begin{align}\label{kill:sp1}
\inf_{x\in B_{2\delta}(y_0)}\mP_x\Big(X_t\in B_{\delta}(y_0); t<\tau_D\Big)\geq C_0\rho\big(t,\ell_1(t)/\lambda_0\big)|B_{\delta}(y_0)|>0.
\end{align}
Fix $s\in(0,t)$ and $n\in\mN$. We define a stopping time $T_n$ as follows:
\begin{equation*}
T_n=
\begin{cases}
ks2^{-n}\quad &\text{if}\ (k-1)s2^{-n}\leq \tau_D<ks2^{-n},\\
\infty\quad &\text{if}\ \tau_D\geq s.
\end{cases}
\end{equation*}
Clearly, on $\{\tau_D<s\}$,
$$
s\geq T_n\downarrow\tau_D\ \ \mbox{as $n\to\infty$.}
$$
For any bounded $A\in\sB(D)$, by the Markov property  and \eqref{DD1},   we have
\begin{align*}
\mathbb{P}_x(X_t\in A;\tau_D<s)
&=\sum^{2^n}_{k=1}\mathbb{P}_x(X_t\in A;(k-1)s2^{-n}\leq \tau_D<ks2^{-n})\nonumber\\
&=\sum^{2^n}_{k=1}\mathbb{P}_x(X_t\in A; T_n=ks2^{-n})\nonumber\\
&=\sum^{2^n}_{k=1}\mathbb{E}_x\Big(\mP_{X_{T_n}}(X_{t-T_n}\in A);T_n=ks2^{-n}\Big)\no\\
&=\sum^{2^n}_{k=1}\mathbb{E}_x\left(\int_Ap(t-T_n,X_{T_n},y)\dif y;T_n=ks2^{-n}\right)\nonumber\\
&=\mathbb{E}_x\left(\int_Ap(t-T_n,X_{T_n},y)\dif y;\tau_D<s\right)\\
&\stackrel{\eqref{DD2}}{\leq} C_0\mathbb{E}_x\left(\int_A\rho\big(t-T_n,\Gamma_{t-T_n}(X_{T_n},y)/\lambda_0\big)\dif y;\tau_D<s\right).
\end{align*}
Letting $n\to\infty$ and by \eqref{DD4} and the dominated convergence theorem, we get
$$
\mathbb{P}_x(X_t\in A;\tau_D<s)\leq C_0\mathbb{E}_x\left(\int_A\rho\big(t-\tau_D,\Gamma_{t-\tau_D}(X_{\tau_D},y)/\lambda_0\big)\dif y;\tau_D<s\right).
$$
Below we take $A=B_{\delta}(y_0)$. For $y\in B_\delta(y_0)$, since $X_{\tau_D}\notin D$ and
$r\to \rho(t-\tau_D,r)$ is  decreasing,  $t\mapsto\rho(t,\ell_1(t)/\lambda_0)$ is increasing, 
by \eqref{AA01} we have
$$
\rho\big(t-\tau_D,\Gamma_{t-\tau_D}(X_{\tau_D},y)/\lambda_0\big)\leq\rho\big(t-\tau_D,\ell_1(t-\tau_D)/\lambda_0\big)\leq \rho\big(t,\ell_1(t)/\lambda_0\big).
$$
Hence, for any $s<t$,
\begin{align*}
\mathbb{P}_x\(X_t\in B_{\delta}(y_0);\tau_D<s\)\leq C_0\int_{B_{\delta}(y_0)}\rho\big(t,\ell_1(t)/\lambda_0\big)\dif y.
\end{align*}
On the other hand, by \eqref{DD11} and \eqref{DD2}, we have
\begin{align*}
\mP_x(\tau_D=t)&=\mP_x(\tau_D=t, X_t\in\p D)+\mP_x(\tau_D=t, X_t\notin\p D, X_s\in D, s<\tau_D)\\
&\leq\mP_x(X_t\in\p D)+\mP_x(X_t\not=X_{t-})=0.
\end{align*}
Thus, by the lower bound in \eqref{DD2} and the above estimates, we arrive at
\begin{align*}
\mathbb{P}_x\(X_t\in B_{\delta}(y_0);t<\tau_D\)&=\mathbb{P}_x\(X_t\in B_{\delta}(y_0)\)-\mathbb{P}_x\(X_t\in B_{\delta}(y_0);\tau_D<t\)\\
&=\int_{B_{\delta}(y_0)}p(t,x,y)\dif y-\lim_{s\uparrow t}\mathbb{P}_x\(X_t\in B_{\delta}(y_0);\tau_D<s\)\\
&\geq \int_{ B_{\delta}(y_0)} \Big(C_0^{-1} \rho\big(t,\lambda_0\Gamma_t(x,y)\big)-C_0\rho\big(t,\ell_1(t)/\lambda_0\big)\Big)\dif y.
\end{align*}
For $x\in B_{2\delta}(y_0), y\in B_\delta(y_0)$, since $r\to \rho(t,r)$ is decreasing, by \eqref{AA01}, we obtain
\begin{align*}
\mathbb{P}_x\(X_t\in B_{\delta}(y_0);t<\tau_D\)&\geq \Big(C_0^{-1}\rho\big(t,\lambda_0\ell_0(t)\big)-C_0\rho\big(t,\ell_1(t)/\lambda_0\big)\Big)|B_{\delta}(y_0)|\\
&\!\!\!\stackrel{\eqref{AA2}}{\geq}C_0\rho\big(t,\ell_1(t)/\lambda_0\big)|B_{\delta}(y_0)|.
\end{align*}
Thus we get \eqref{kill:sp1}.

(ii) By \eqref{DD1}, there is a density $p^D(t,x,y)$ so that for each $A\in\sB(D)$,
$$
\int_A p^D(t,x,y)\dif y=\mathbb{P}_x(X_t\in A;t<\tau_D).
$$
Moreover, by the Markov property, we have for each $0<s<t$,
\begin{align}\label{TT8}
\int_Ap^D(t,x,y)\dif y=\int_Dp^D(s,x,z)\dif z\int_Ap^D(t-s,z,y)\dif y.
\end{align}
Indeed, by $s+\tau_D\circ\theta_s=\tau_D$ on $\{s<\tau_D\}$ and \eqref{Ma1},
\begin{align*}
\mathbb{P}_x(X_t\in A;t<\tau_D)&=\mE_x\Big(\mathbb{P}_x\big(X_t\in A;t<\tau_D|\sF_s\big)\Big)\\
&=\mE_x\Big(\mathbb{P}_x\big(X_{t-s}\circ \theta_s\in A;t<s+\tau_D\circ\theta_s, s<\tau_D|\sF_s\big)\Big)\\
&=\mE_x\Big(\mathbb{P}_{X_s}\big(X_{t-s}\in A;t-s<\tau_D\big); s<\tau_D\Big)\\
&=\int_Dp^D(s,x,z)\int_Ap^D(t-s,z,y)\dif y\dif z.
\end{align*}
Let $x_0\in D_0$. Since $D_0$ is connected, there is a continuous curve $\cC\subset D_0$ connecting $x_0$ and $y_0$. Let
$$
\delta:=({\rm dist}(D_0,\p D)\wedge\delta_0)/4.
$$ 
It is easy to see that there exists $n\in\mathbb{N}$ large enough and $\{x_i,i=0,1,\cdots,n+1\}\subset\cC$ such that 
$$
t\leq nt_0,\ \ x_i\in B_{\delta}(x_{i-1}),\ i=1,\cdots,n+1,\ B_{\delta}(x_{n+1})\subset B_{r_2}(y_0).
$$
For each $j=1,\cdots,n+1$, since $B_\delta(x_{j-1})\subset B_{2\delta}(x_j)$, by what we have proved in Step (i), 
\begin{align*}
\inf_{y_{j-1}\in B_{\delta}(x_{j-1})}\int_{B_{\delta}(x_j)} p^D(\tfrac{t}n, y_{j-1},y_j)\dif y_j
&\geq\inf_{y_{j-1}\in B_{2\delta}(x_{j})}\int_{B_{\delta}(x_j)} p^D(\tfrac{t}n, y_{j-1},y_j)\dif y_j\\
&\geq C_0\rho\big(\tfrac tn,\ell_1(\tfrac tn)/\lambda_0\big)|B_{\delta}(y_0)|>0.
\end{align*}
Hence, for all $x\in B_{\delta}(x_0)$, by \eqref{TT8},
\begin{align*}
&\int_{B_{r_2}(y_0)}p^D(t,x,y)\dif y=\int_D p^D(\tfrac{t}n,x,y_1)\dif y_1\cdots\int_D p^D(\tfrac{t}n, y_{n-1},y_n)\dif y_n\int_{B_{r_2}(y_0)}p^D(\tfrac{t}{n},y_n,y)\dif y\\
&\qquad\geq\int_{B_{\delta}(x_1)} p^D(\tfrac{t}n,x,y_1)\dif y_1\cdots\int_{B_{\delta}(x_n)} p^D(\tfrac{t}n, y_{n-1},y_n)\dif y_n\int_{B_{\delta}(x_{n+1})}p^D(\tfrac{t}{n},y_n,y)\dif y\\
&\qquad\geq \Big(C_0\rho\big(\tfrac tn,\ell_1(\tfrac tn)/\lambda_0\big)|B_{\delta}(y_0)|\Big)^{n+1}.
\end{align*}
Thus,
$$
\inf_{x\in B_{\delta}(x_0)}\mP_x\Big(X_t\in B_{r_2}(y_0); t<\tau_D\Big)\geq \Big(C_0\rho\big(\tfrac tn,\ell_1(\tfrac tn)/\lambda_0\big)|B_{\delta}(y_0)|\Big)^{n+1}>0.
$$
For general $r_1\in(0,{\rm dist}(D_0,D))$, it follows by the finitely covering technique.
\end{proof}

\section{Proof of Theorem \ref{Main}}

In this section we prove Theorem \ref{Main} by verifying the conditions in Theorem \ref{Th11}.
Below we fix 
$$
r>-\alpha,\ \ p\in((-r)\vee 0,\alpha),
$$ 
and define
\begin{align}\label{KK}
V_p(x):=(1+|x|^2)^{p/2},\quad \KK:=\ff{2^{4+2\alpha}\pi^{\frac{d}{2}}\Gamma(\ff{d+\alpha}{2})}{\Gamma(\ff{d}{2})^2\Gamma(\ff{2-\alpha}{2})}\left(\ff{\alpha}{2-\alpha}+\ff{\alpha}{(\alpha-p)p}\right).
\end{align}
First of all, we show that $V_p$ is a Lyapunov function of the operator $\sL$.
\bl\label{Le31}
Suppose that $b,\sigma$ are locally bounded and for some $c_0,c_1>0$,
\begin{align}\label{LL1}
\KK\|\sigma(x)\|^\alpha|x|^{2-\alpha}+\<x,b(x)\>\leq -c_0|x|^{2+r}+c_1,\ \ |x|>1.
\end{align}
Then there are $\kappa_0, \kappa_1>0$ such that for all $x\in\mR^d$,
\begin{align}\label{Lya}
\sL V_p(x)=\cL_\sigma V_p(x)+\<b(x),\nabla V_p(x)\>\leq -\kappa_0 V_p(x)^{1+\frac rp}+\kappa_1.
\end{align}
\el
\begin{proof}
It suffices to prove \eqref{Lya} for $|x|>1$. By \eqref{CL1}, we make the following decomposition:
\begin{align*}
c^{-1}_{d,\alpha}\cL_\sigma V_p(x)
&=\int_{|z|\leq \frac{|x|}{2\|\sigma(x)\|}}\Big[V_p(x+\sigma(x)z)+V_p(x-\sigma(x)z)-2V_p(x)\Big]\frac{\dif z}{|z|^{d+\alpha}}\\
&+\int_{|z|>\frac{|x|}{2\|\sigma(x)\|}}\Big[V_p(x+\sigma(x)z)+V_p(x-\sigma(x)z)-2V_p(x)\Big]\frac{\dif z}{|z|^{d+\alpha}}=:I_1+I_2,
\end{align*}
where $c_{d,\alpha}:=\frac{\alpha 2^\alpha\Gamma(\frac{d+\alpha}{2})}{\Gamma(\frac d2)\Gamma(\frac{2-\alpha}2)}$.
For $I_1$, denoting by $A(x):=(1+|x|^2)\mI-(2-p)x\otimes x$, we have
$$
\nabla^2 V_p(x)=p(1+|x|^2)^{\frac{p}{2}-2}A(x).
$$
Noting that for any $\xi\in\mathbb{R}^d$,
\begin{align*}
|\langle\xi, A(x)\xi\rangle|\leq (1+|x|^2)|\xi|^2+(2-p)|\langle\xi, x\rangle|^2\leq 3(1+|x|^2)|\xi|^2.
\end{align*}
By Taylor's expansion, for  $|z|\leq \frac{|x|}{2\|\sigma(x)\|}$, we have
\begin{align*}
&|V_p(x+\sigma(x)z)+V_p(x-\sigma(x)z)-2V_p(x)|\\
&\quad\leq 3p|\sigma(x)z|^2\int^1_0\int^1_{-1}(1+|x+s_1s_2\sigma(x)z|^2)^{\frac p2-1}\dif r_1\dif r_2\\
&\quad\leq 3p\|\sigma(x)\|^2|z|^2\Big(1+\tfrac{|x|^2}{4}\Big)^{\frac p2-1}.
\end{align*}
Hence, for $S_d:=2\pi^{\ff{d}{2}}/\Gamma(\ff{d}{2})$,
\begin{align*}
I_1&\leq 3p \|\sigma(x)\|^2\Big(1+\tfrac{|x|^2}{4}\Big)^{\frac p2-1}\int_{|z|\leq \frac{|x|}{2\|\sigma(x)\|}}|z|^{2-d-\alpha}\dif z\\
     &=3pS_d \|\sigma(x)\|^2\Big(1+\tfrac{|x|^2}{4}\Big)^{\frac p2-1}\int_0^{\frac{|x|}{2\|\sigma(x)\|}}r^{1-\alpha}\dif r\\
     &\leq \frac{p2^{2+\alpha} S_d}{2-\alpha}\|\sigma(x)\|^\alpha|x|^{p-\alpha}.
\end{align*}
For $I_2$, noting that
$$
|(1+|x+z|^2)^{\frac p2}-(1+|x|^2)^{\frac p2}|\leq ||x+z|^2-|x|^2|^{\ff{p}{2}}\leq 2^{\ff{p}{2}}|x|^{\ff{p}{2}}|z|^{\ff{p}{2}}+|z|^p,
$$
we have 
\begin{align*}
I_2\leq 2\int_{|z|>\frac{|x|}{2\|\sigma(x)\|}}\Big[2^{\ff{p}{2}}|x|^{\ff{p}{2}}|\sigma(x)z|^{\ff{p}{2}}+|\sigma(x)z|^p\Big]\frac{\dif z}{|z|^{d+\alpha}}
     \leq \frac{2^{3+\alpha} S_d}{\alpha-p}\|\sigma(x)\|^\alpha|x|^{p-\alpha}.
\end{align*}
Combining the above calculations, we get for $|x|>1$, 
\begin{align*}
\cL_\sigma V_p(x)\leq& 2^{3+\alpha}p S_d c_{d,\alpha}\left(\frac{1}{2-\alpha}+\frac{1}{(\alpha-p)p}\right)\|\sigma(x)\|^\alpha|x|^{p-\alpha}\\
\leq& p2^{\frac p2-1}\KK \|\sigma(x)\|^\alpha|x|^{p-\alpha}
\end{align*}
Thus, then  by \eqref{LL1} and $\nabla V_p(x)=p x(1+|x|^2)^{\frac{p}{2}-1}$, we have
\begin{align*}
\cL_\sigma V_p(x)+\<b(x),\nabla V_p(x)\>&\leq p2^{\frac p2-1}\KK \|\sigma(x)\|^\alpha|x|^{p-\alpha}+p(1+|x|^2)^{\frac p2-1}\<x,b(x)\>\\
&\leq p(1+|x|^2)^{\frac p2-1}\Big(\KK\|\sigma(x)\|^\alpha|x|^{2-\alpha}+\<x,b(x)\>\Big)\\
&\leq p(1+|x|^2)^{\frac p2-1}\big(-c_0|x|^{2+r}+c_1\big)\\
&\leq -\kappa_0 (1+|x|^2)^{\frac {p+r}2}+\kappa_1=-\kappa_0V_p(x)^{1+\frac {r}p}+\kappa_1,
\end{align*}
where in the second inequality we use $(1+|x|^2)^{1-\frac p2}\leq 2^{1-\frac p2}|x|^{2-p}$ with $|x|>1$ , and $\kappa_0=pc_0$.
\end{proof}

As a consequence of the above Lyapunov-type estimate, we have the following result (see \cite[Lemma 7.1]{XZ20}).
\begin{lemma}
Let $X_t(x)$ be any weak solution of SDE \eqref{sde:main} starting from $x$.
Under the assumptions of Lemma \ref{Le31},  there exists a constant $C>0$ such that for all $x\in\mR^d$ and $t>0$,
\begin{align}\label{e:x1}
\left[\mathbb{E}\left(\sup_{s\in[0,t]}V_p(X_s(x))^{\frac12}\right)\right]^2+\mE\left(\int^{t}_0 V_p(X_s(x))^{1+\frac rp}\dif s\right)\lesssim_C V_p(x)+t,
\end{align}
and for the $\kappa_0$ in \eqref{Lya},
\begin{align}\label{e:x}
\mathbb{E}V_p(X_t(x))\leq 
\left\{
\begin{aligned}
&\e^{-\kappa_0t}V_p(x)+C,\ \ &r=0,\\
&C(1+t^{-p/r}),\ \ &r>0. 
\end{aligned}
\right.
\end{align}
\end{lemma}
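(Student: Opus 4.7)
The plan is to combine the Lyapunov estimate \eqref{Lya} from Lemma~\ref{Le31} with It\^o's formula, a localization argument, and Burkholder-Davis-Gundy for the jump martingale. Throughout set $\tau_n:=\inf\{t\geq 0:|X_t(x)|\geq n\}$.

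For the integrated bound in \eqref{e:x1}, It\^o's formula applied to $V_p(X_{t\wedge\tau_n})$ combined with \eqref{Lya} gives
$$
V_p(X_{t\wedge\tau_n})+\kappa_0\int_0^{t\wedge\tau_n}V_p(X_s)^{1+r/p}\dif s\leq V_p(x)+\kappa_1 t+M_{t\wedge\tau_n},
$$
where $M$ is the compensated-Poisson martingale coming from the jumps of $\sigma(X_{s-})\dif L^\alpha_s$. Taking expectations kills $M$, and Fatou's lemma as $n\to\infty$ produces the integral estimate in \eqref{e:x1}, together with the side bound $\mE V_p(X_t)\leq V_p(x)+\kappa_1 t$.

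For \eqref{e:x}, let $f(t):=\mE V_p(X_t)$. Running the same It\^o identity between $s$ and $t$ and applying Jensen's inequality (using $1+r/p\geq 1$),
$$
f(t)+\kappa_0\int_s^tf(u)^{1+r/p}\dif u\leq f(s)+\kappa_1(t-s),
$$
so by ODE comparison $f$ is dominated by the solution of $\dot z=-\kappa_0 z^{1+r/p}+\kappa_1$ with $z(0)=V_p(x)$. For $r=0$, a linear Gronwall gives $f(t)\leq\e^{-\kappa_0 t}V_p(x)+\kappa_1/\kappa_0$. For $r>0$, the equilibrium $z_\ast:=(\kappa_1/\kappa_0)^{p/(p+r)}$ is attracting: above $2z_\ast$ one has $\dot z\leq-\tfrac{\kappa_0}{2}z^{1+r/p}$, whose explicit solution decays like $t^{-p/r}$; below $2z_\ast$ the solution is bounded. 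Combining yields $f(t)\leq C(1+t^{-p/r})$.

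For the supremum part of \eqref{e:x1}, apply It\^o to $V_p^{1/2}(X_t)=V_{p/2}(X_t)$. Splitting the L\'evy integration at $|z|=|X_s|/\|\sigma(X_s)\|$ exactly as in the proof of Lemma~\ref{Le31} (a Taylor bound for small $|z|$ and $|V_{p/2}(x+y)-V_{p/2}(x)|\lesssim|x|^{p/4}|y|^{p/4}+|y|^{p/2}$ for large $|z|$, whose square integrates against $|z|^{-d-\alpha}\dif z$ because $p<\alpha$), the quadratic variation $[\wt M]$ of the associated jump martingale satisfies
$$
\mE[\wt M]_{t\wedge\tau_n}\lesssim\mE\int_0^{t\wedge\tau_n}|X_s|^{p-\alpha}\|\sigma(X_s)\|^\alpha\dif s\lesssim\mE\int_0^t\big(V_p(X_s)^{1+r/p}+1\big)\dif s\leq C(V_p(x)+t),
$$
using the bound $\|\sigma(x)\|^\alpha|x|^{2-\alpha}\lesssim|x|^{2+r}+1$ extracted from $\mathbf{(H^{r,\KK}_{\rm glo})}$ and the integrated estimate from the first step. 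BDG and Jensen then give $\mE\sup_{s\leq t\wedge\tau_n}|\wt M_s|\leq C(V_p(x)+t)^{1/2}$. Combined with a parallel estimate of the drift $\mE\int_0^t(\sL V_{p/2})^+\dif s$ via an analog of the Lyapunov computation of Lemma~\ref{Le31} applied to $V_{p/2}$, squaring produces $[\mE\sup V_p^{1/2}(X_s)]^2\leq C(V_p(x)+t)$, and sending $n\to\infty$ completes \eqref{e:x1}.

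The main obstacle is the sup estimate: the constant $\KK$ in $\mathbf{(H^{r,\KK}_{\rm glo})}$ is tuned to the index $p$ of the theorem statement, so the direct analog of the Lyapunov computation applied to $V_{p/2}$ produces a slightly different constant $\KK_{p/2}$, and one must carefully track constants and absorb any excess using the dissipation $-c_0|x|^{2+r}$. This is precisely the technical content of \cite[Lemma~7.1]{XZ20}, on which the argument ultimately relies.
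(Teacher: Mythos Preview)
Your treatment of the integrated bound in \eqref{e:x1} and of \eqref{e:x} matches the paper's (for $r>0$ the paper multiplies $f(t)$ by $t^{2p/r}$ and uses Young's inequality rather than your attractor argument, but both reach the same conclusion).

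The genuine gap is in the supremum estimate. Your BDG route requires the bound $\|\sigma(x)\|^\alpha|x|^{2-\alpha}\lesssim |x|^{2+r}+1$, which you say is ``extracted from $\mathbf{(H^{r,\KK}_{\rm glo})}$''. But $\mathbf{(H^{r,\KK}_{\rm glo})}$ bounds only the \emph{sum} $\langle x,b(x)\rangle + \KK\|\sigma(x)\|^\alpha|x|^{2-\alpha}$, not the individual terms: in the exponential example following Example~\ref{Ex13} one has $\|\sigma(x)\|^\alpha|x|^{2-\alpha}\asymp |x|^{2-\alpha}\e^{\alpha|x|/\beta}$, which is not polynomially bounded at all. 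Hence neither your quadratic-variation estimate $\mE[\wt M]_t\lesssim \mE\int_0^t(V_p(X_s)^{1+r/p}+1)\dif s$ nor the parallel drift bound on $(\sL V_{p/2})^+$ (which, as you note, needs the larger constant $\KK_{p/2}>\KK_p$ and therefore again separate control on $\|\sigma\|^\alpha|x|^{2-\alpha}$) goes through under the stated hypotheses. The obstacle you flag at the end is real, but the ``excess'' $(\KK_{p/2}-\KK_p)\|\sigma(x)\|^\alpha|x|^{2-\alpha}$ simply cannot be absorbed by $-c_0|x|^{2+r}$ in general.

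The paper sidesteps this entirely: starting from the pathwise inequality $V_p(X_t)\leq V_p(x)+\kappa_1 t+M_t$ (a consequence of \eqref{Lya} and It\^o), it invokes the \emph{stochastic Gronwall inequality} \cite[Lemma~3.7]{XZ20} to obtain $\big[\mE\sup_{s\leq t}V_p(X_s)^{1/2}\big]^2\lesssim V_p(x)+t$ directly. That lemma uses only the nonnegativity of $V_p(X_t)$ and the local-martingale property of $M$; no bound on $[M]$ is needed, and hence no growth control on $\sigma$ beyond what $\mathbf{(H^{r,\KK}_{\rm glo})}$ already provides.
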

\begin{proof}
Let $N(t,\dif z)$ be the Poisson random measure associated with $L^\alpha_t$, i.e.,
$$
N(t,\Gamma):=\sum_{s\in(0,t]}\1_{\Gamma}(L^\alpha_s-L^\alpha_{s-}),\ \ \Gamma\in\sB(\mR^d).
$$
Let ${\wt N}(t,\dif z):=N(t,\dif z)-t|z|^{-d-\alpha}\dif z$ be the compensated Poisson random measure.
By L\'evy-It\^o's decomposition, one can write
\begin{align*}
L^\alpha_t=\int_{|z|<1}z{\wt N}(t,\dif z)+\int_{|z|\geq1}z N(t,\dif z).
\end{align*}
Thus SDE \eqref{sde:main} can be written as
\begin{align}
\label{sde:main1}
\dif X_t=b(X_t)\dif t+\int_{|z|<1}\sigma(X_{t-})z{\wt N}(\dif t,\dif z)+\int_{|z|\geq1}\sigma(X_{t-})z N(\dif t,\dif z).
\end{align}
By It\^o's formula (see \cite[Theorem 4.4.7]{App09}) and \eqref{Lya}, we have 
\begin{align}
V_p(X_t)&=V_p(x)+\int^t_0\Big[\langle \nabla V_p(X_s),b(X_s)\rangle+\cL_\sigma V_p(X_s)\Big]\dif s+M_t\label{ito1}\\
&\leq V_p(x)+\int^t_0\Big[-\kappa_0 V_p(X_s)^{1+\frac rp}+\kappa_1\Big]\dif s+M_t,\label{ito2}
\end{align}
where $M_t$ is a local c\'adl\'ag martingale. Let $\tau_n$ be a sequence of stopping times localizing $M_t$, i.e., $M_{t\wedge\tau_n}$ is a martingale and $\tau_n\uparrow \infty$ as $n\to\infty$.
Then we have
$$
\mE V_p(X_{t\wedge\tau_n})+\kappa_0\mE\int^{t\wedge\tau_n}_0 V_p(X_s)^{1+\frac rp}\dif s\leq V_p(x)+\kappa_1 t.
$$
In particular, letting $n\to\infty$ and by Fatou's lemma, we obtain that for all $t\geq 0$,
$$
\mE V_p(X_{t})+\kappa_0\mE\left(\int^{t}_0 V_p(X_s)^{1+\frac rp}\dif s\right)\leq V_p(x)+\kappa_1 t.
$$
Moreover, starting from \eqref{ito2}, by stochastic Gronwall's inequality (\cite[Lemma 3.7]{XZ20}), we also have
$$
\left[\mathbb{E}\left(\sup_{s\in[0,t]}V_p(X_s(x))^{\frac12}\right)\right]^2\lesssim_C V_p(x)+t.
$$
On the other hand, starting from \eqref{ito1}, as above, we have
$$
\mE V_p(X_t)=V_p(x)+\int^t_0\mE\Big[\langle \nabla V_p(X_s),b(X_s)\rangle+\cL_\sigma V_p(X_s)\Big]\dif s.
$$
For $r\geq 0$, by \eqref{Lya} and Jensen's inequality, we have
\begin{align*}
\frac{\dif}{\dif t} \mathbb{E}V_p(X_t)\leq-\kappa_0\mathbb{E}\Big[V_p(X_t)^{1+\frac rp}\Big]+\kappa_1
\leq-\kappa_0\Big[\mathbb{E}V_p(X_t)\Big]^{1+\frac rp}+\kappa_1.
\end{align*}
Let $f(t):=\mathbb{E}V_p(X_t)$.  The above inequality then reads
$$
 f(t)'\leq -\kappa_0 f(t)^{1+\frac rp}+\kappa_1.
$$
If $r=0$, then
\begin{align}\label{LL2}
(\e^{\kappa_0 t}f(t))'\leq \kappa_1\e^{\kappa_0 t}\Rightarrow f(t)\leq \e^{\kappa_0(s-t)}f(s)+\tfrac{\kappa_1}{\kappa_0}(1-\e^{\kappa_0(s-t)}),\ \ t>s\geq 0.
\end{align}
If $r>0$, then by Young's inequality,
\begin{align*}
(t^{\frac{2p}r} f(t))'&\leq \tfrac{2p}r t^{\frac{2p}r-1} f(t)-\kappa_0t^{\frac{2p}r}f(t)^{1+\frac rp}+\kappa_1t^{\frac{2p}r}\leq C t^{\frac{p}r-1}+\kappa_1t^{\frac{2p}r}.
\end{align*}
Integrating both sides from $0$ to $t$, we obtain
$$
t^{\frac{2p}r} f(t)\leq \tfrac{rC}p t^{\frac{p}r}+\tfrac{r\kappa_1}{r+2p} t^{\frac{2p}r+1}\Rightarrow f(t)\leq \tfrac{rC}p t^{-\frac{p}r}+\tfrac{r\kappa_1}{r+2p} t,
$$
which together with \eqref{LL2} yields \eqref{e:x}.
\end{proof}

Let $\phi:\mathbb{R}^d\to\mathbb{R}_+$ be a smooth density function with support in the unit ball. Define
$$
\phi_\eps(x):=\eps^{-d}\phi(\eps^{-1}x),\ \ x\in\mR^d,\ \eps\in(0,1),
$$
and
$$
b_\eps(x):=b*\phi_\eps(x), \quad \sigma_\eps(x):=\sigma*\phi_\eps(x).
$$
By {\bf (H$_{\rm loc}$)}, it is easy to see that for any $m\in\mN$ and all $\eps\in(0,1)$,
\begin{align}\label{AA3}
|b_\eps(x)-b_\eps(y)|+\|\sigma_\eps(x)-\sigma_\eps(y)\|\leq C_{m+1}|x-y|^\gamma,\ \ \forall x,y\in B_m,
\end{align}
and for some $\eps_1\in(0,1)$ depending on $m$, and for all $\eps\in(0,\eps_1)$,
\begin{align}\label{AA4}
C_m^{-1}|\xi|\leq |\sigma_\eps(x)\xi|\leq C_m|\xi|,\ \ \forall x\in B_m,\ \xi\in\mR^d.
\end{align}
Moreover, note that by \eqref{Hol},
\begin{align*}
|b_\eps(x)-b(x)|\leq\int_{\mR^d}|b(x-y)-b(x)|\rho_\eps(y)\dif y\leq \eps^\gamma\ell_1(x),
\end{align*}
and
\begin{align*}
|\sigma_\eps(x)-\sigma(x)|\leq\int_{\mR^d}|\sigma(x-y)-\sigma(x)|\rho_\eps(y)\dif y\leq \eps^\gamma\ell_2(x).
\end{align*}
Hence, by {\bf (H$^{r,q}_{\rm glo}$)}, for any $\eps\in(0,\eps^{1/\gamma}_0)$,
\begin{align}
\<x,b_\eps(x)\>+\KK|\sigma_\eps(x)|^\alpha|x|^{2-\alpha}
&\leq\<x,b(x)\>+\eps^\gamma|x|\ell_1(x)+\KK(\|\sigma(x)\|+\ell_2(x)\eps^\gamma)^\alpha|x|^{2-\alpha}\no\\
&\leq\<x,b(x)\>+\eps_0|x|\ell_1(x)+\KK(\|\sigma(x)\|+\ell_2(x)\eps_0)^\alpha|x|^{2-\alpha}\no\\
&\leq -c_0|x|^{2+r}+c_1.\label{BV1}
\end{align}
Now, consider the following approximation SDE:
\begin{align}
\label{sde:app}
\dif X^{\eps}_t=b_\eps(X^{\eps}_t)\dif t+\sigma_\eps(X^{\eps}_{t-})\dif L^{\alpha}_t,\quad X^{\eps}_0=x.
\end{align}
Since the coefficients are smooth, but, may be  not bounded, by \eqref{BV1} and \cite[Theorem 1.1]{CZZ21}, there is a unique strong solution $X^\eps_t(x)$ to the above SDE. 
Moreover, for any $f\in C_b(\mR^d)$, we have
\begin{align}\label{DD9}
\lim_{\eps\to 0}\mE f(X^\eps_t(x))=\mE f(X_t(x)),
\end{align}
which is proven in Appendix. Notice that if $b$ and $\sigma$ are locally Lipschitz, then it is not necessary to mollify $b$ and $\sigma$.
It suffices to consider the truncated $b$ and $\sigma$ as done below.

Let $\chi$ be a cutoff function with 
$$
\chi(x)=1,\ \ |x|\leq \tfrac{1}{2},\ \ \chi(x)=0,\ \ |x|>1.
$$
For $m\in\mN$, set
$$
\chi_m(x):=\chi(x/m),\ \ b^\eps_m(x):=b_\eps(x)\chi_m(x), \quad \sigma^\eps_m(x):=\sigma_\eps(x\chi_m(x)).
$$
By \eqref{AA3} and \eqref{AA4}, it is easy to see that $b^\eps_m$ and $\sigma^\eps_m$ satisfy the following global assumptions:
\begin{align}\label{AA0}
|b^\eps_m(x)-b^\eps_m(y)|+\|\sigma^\eps_m(x)-\sigma^\eps_m(y)\|\leq C_m|x-y|^\gamma,\ \ \forall x,y\in\mR^d,
\end{align}
and
\begin{align}\label{AA1}
C_m^{-1}|\xi|\leq |\sigma^\eps_m(x)\xi|\leq C_m|\xi|,\ \ \forall x,\xi\in\mR^d,
\end{align}
where the constant does not depend on $\eps$. 
Let $\theta_t(x)$ solve the following ODE:
\begin{align}\label{ode:mp}
\dot \theta_t(x)=(b^\eps_m*\phi_{t^{1/\alpha}})(\theta_t(x)),\quad \theta_0(x)=x.
\end{align}
We also consider the following SDE with  cutoff coefficients:
\begin{align}
\label{sde:app0}
\dif X^{\eps,m}_t=b^\eps_m(X^{\eps,m}_t)\dif t+\sigma^\eps_m(X^{\eps,m}_{t-})\dif L^{\alpha}_t,\quad X^{\eps,m}_0=x.
\end{align}
Let $X^{\eps,m}_t(x)$ be the unique strong solution of the above SDE. By \cite[Theorem 1.1]{MZ20}, $X^{\eps,m}_t(x)$ admits a density $p^\eps_m(t,x,y)$ with the following estimates:
For any $T>0$ and $m\in\mN$, 
there is a constant $C_0\geq 1$ depending on $m$, but {\it independent of} $\eps\in(0,\eps_1)$, such that for all $t\in(0,T]$ and $x,y\in\mathbb{R}^d$,
\begin{align}
\label{est:asy}
p^\eps_m(t,x,y)\asymp_{C_0}t(t^{\frac{1}{\alpha}}+|x-\theta_t(y)|)^{-d-\alpha},
\end{align}
and
\begin{align}
\label{est:gra}
|\nabla \log p^\eps_m(t,x,\cdot)|(y)\leq C_0 t^{-\frac{1}{\alpha}}.
\end{align}
For any $m\in\mN$ and $x\in B_m$, define the exit time of $X^{\eps,m}_t(x)$ from $B_m$ by 
$$
\tau^{\eps,x}_{B_m}:=\inf\big\{t>0:X^{\eps}_t(x)\notin B_m\big\}.
$$
By the uniqueness of strong solution, we have 
\begin{align}
\label{equ:loc}
X^\eps_{t}(x)=X^{\eps,m}_{t}(x),\quad t<\tau^{\eps,x}_{B_m}.
\end{align}

\bl[\bf Strong Feller property] \label{33}
For any $t>0$ and bounded measurable function $f$, the function $x\mapsto P_tf(x)$ is bounded continuous.
\el
\begin{proof}
By a standard monotonic argument, it suffices to show that for any $t>0$,
$$
\lim_{x\to y}\sup_{f\in C_b(\mR^d), \|f\|_\infty\leq 1}|\mathbb{E}(f(X_t(x))-f(X_t(y)))|=0.
$$
Since for any $f\in C_b(\mR^d)$, 
$$
\lim_{\eps\to 0}\mathbb{E}(f(X^\eps_t(x))-f(X^\eps_t(y)))\stackrel{\eqref{DD9}}{=}\mathbb{E}(f(X_t(x))-f(X_t(y))),
$$
furthermore, we only need to prove that
\begin{align}\label{cb}
\lim_{x\to y}\sup_{\eps\in(0,1)}\sup_{f\in C_b(\mR^d), \|f\|_\infty\leq 1}|\mathbb{E}(f(X^\eps_t(x))-f(X^\eps_t(y)))|=0.
\end{align}
Given $x,y\in\mR^d$, let $m>|x|\vee |y|$. For given $f\in C_b(\mR^d)$ with $\|f\|_\infty\leq 1$, we have
\begin{align*}
|\mathbb{E}(f(X^\eps_t(x))-f(X^\eps_t(y)))|
&\leq\big|\mathbb{E}\big[(f(X^\eps_t(x))-f(X^\eps_t(y)))\mathbf{1}_{t<\tau^{\eps,x}_{B_m}\wedge \tau^{\eps,y}_{B_m}}\big]\big|
+2 \mathbb{P}\big(t\geq\tau^{\eps,x}_{B_m}\wedge \tau^{\eps,y}_{B_m}\big)\\
&\!\!\!\!\!\stackrel{\eqref{equ:loc}}{=}\big|\mathbb{E}\big[(f(X^{\eps,m}_t(x))-f(X^{\eps,m}_t(y)))\mathbf{1}_{t<\tau^{\eps,x}_{B_m}\wedge \tau^{\eps,y}_{B_m}}\big]\big|
+2 \mathbb{P}\big(t\geq\tau^{\eps,x}_{B_m}\wedge \tau^{\eps,y}_{B_m}\big)\\
&\leq\big|\mathbb{E}\big[(f(X^{\eps,m}_t(x))-f(X^{\eps,m}_t(y))\big]\big|+2 \mathbb{P}\big(t\geq\tau^{\eps,x}_{B_m}\wedge \tau^{\eps,y}_{B_m}\big).
\end{align*}
By \eqref{est:gra}, we have
\begin{align*}
|\mathbb{E}(f(X^{\eps,m}_t(x))-f(X^{\eps,m}_t(y)))|
&=\left|\int_{\mR^d}f(z) (p^\eps_m(t,x,z)-p^\eps_m(t,x,z))\dif z\right|\\
&\leq \|f\|_{\infty} |x-y|\int_{\mathbb{R}^d}\int^1_0|\nabla p^\eps_m(t,x+r(y-x),z)|\dif z\dif r\\
&\leq C_m t^{-\frac{1}{\alpha}} |x-y|\int^1_0\int_{\mathbb{R}^d} p^\eps_m(t,x+r(y-x),z)\dif z\dif r\\
&= C_m t^{-\frac{1}{\alpha}} |x-y|.
\end{align*}
Moreover, by Chebyshev's inequality and \eqref{e:x1}, 
\begin{align}
\mathbb{P}(t\geq\tau^{\eps,x}_{B_m})\leq \mathbb{P}\left(\sup_{s\leq t}|X^{\eps}_s(x)|\geq m\right)
&\leq \mathbb{E}\left(\sup_{s\leq t}|X^{\eps}_s(x)|^{p/2}\right)/ m^{p/2}\no\\
&\leq C(V_p(x)+t)^{1/2}/m^{p/2}.\label{BV2}
\end{align}
Combining the above calculations, we obtain that for any $f\in C_b(\mR^d)$ with $\|f\|_\infty\leq 1$,
$$
|\mathbb{E}(f(X^\eps_t(x))-f(X^\eps_t(y)))|\leq C_m t^{-\frac{1}{\alpha}} |x-y|+C_0(V_p(x)+V_p(y)+t)^{1/2}/m^{p/2},
$$
where $C_0$ does not depend on $m$.
Thus we obtain \eqref{cb} by first letting $x\to y$ and then $m\to\infty$.
\end{proof}

\bl[{\bf Irreducibility}] \label{34} For any $x_0, y_0\in\mR^d$ and $r, t>0$, we have
\begin{align}
\inf_{x\in B_r(x_0)}\mathbb{P}(X_t(x)\in B_r(y_0))>0.
\end{align}
\el
\begin{proof}
Since $X^\eps_t(x)$ weakly converges to $X_t(x)$ as $\eps\downarrow 0$, for any open set $A\subset\mR^d$, we have
\begin{align}\label{irr}
\mathbb{P}(X_t(x)\in \overline{ A})\geq\liminf_{\eps\to 0}\mathbb{P}(X^\eps_t(x)\in A).
\end{align}
Below we fix $\eps\in(0,1)$ small enough, and $x,y_0\in\mR^d$, $r>0$.
Let $m> 2$ be big enough such that $\{x\}\cup B_r(y_0)\subset B_{m-2}$. Then we have
\begin{align}
\mathbb{P}(X^\eps_t(x)\in B_r(y_0))&\geq \mathbb{P}\(X^{\eps}_t(x)\in B_r(y_0); t<\tau^{\eps,x}_{B_m}\)
\stackrel{\eqref{equ:loc}}{=}  \mathbb{P}\(X^{\eps,m}_{t}(x)\in B_r(y_0); t<\tau^{\eps,x}_{B_m}\).\label{est:lb:ab}
\end{align}
In order to use Theorem \ref{41}, we choose
$$
\rho(t,r)=t(t^{1/\alpha}+r)^{-d-\alpha}.
$$
Clearly, by \eqref{est:asy}, estimate \eqref{DD2} is satisfied for $\Gamma_t(x,y)=|x-\theta_t(y)|$. It remains to find $t_0$ and $\delta_0$ small enough as well as functions $\ell_0(t)$ and $\ell_1(t)$ 
so that the conditions in Theorem \ref{41} are satisfied for  domains $D_0=B_{m-2}$ and $D=B_{m}$.
Note that by \eqref{ode:mp} and the definition of $b^\eps_m$,  for all $t\in[0,1]$ and $y\in\mR^d$,
\begin{align*}
|\theta_t(y)-y|\leq\int^t_0|(b^\eps_m*\phi_{s^{1/\alpha}})(\theta_s(y))|\dif s\leq\sup_{|x|\leq m+1}|b(x)|\cdot t=:C_1 t.
\end{align*}
Let
\begin{align}\label{CC1}
\delta_0\leq \tfrac12\wedge{\rm dist}(y_0,D_0),\ \ t_0\leq \tfrac{1}{2C_1}\wedge\tfrac{\delta_0}{C_1}.
\end{align}
For $x'\notin D$ and $y\in B_{\delta_0}(y_0)\subset D_0$, we have for $t\in(0,t_0]$,
\begin{align*}
\Gamma_t(x',y)=|x'-\theta_t(y)|&\geq |x'-y_0|-|y-y_0|-|\theta_t(y)-y|\geq 2-\delta_0-C_1 t\geq 1.
\end{align*}
On the other hand, for $x,y\in B_{\delta_0}(y_0)$, we have
\begin{align}\label{DD8}
\Gamma_t(x,y)=|x-\theta_t(y)|\leq |x-y|+|\theta_t(y)-y|\leq 2\delta_0+C_1 t\leq 3\delta_0.
\end{align}
Now, for the $C_0$ in \eqref{est:asy},  one can choose $\delta_0, t_0>0$ small enough such that \eqref{CC1} holds and
$$
t\mapsto t/(t^{1/\alpha}+1)^{d+\alpha}\mbox{ is increasing on $(0,t_0]$,}
$$
and
\begin{align}\label{DD6}
\frac{C_0^{-1}t}{(t^{1/\alpha}+3\delta_0)^{d+\alpha}}>\frac{2C_0 t}{(t^{1/\alpha}+1)^{d+\alpha}},\ \ t\in(0,t_0].
\end{align}
In particular, \eqref{AA2} is satisfied. Thus by Theorem \ref{41}, we conclude 
$$
\mathbb{P}\(X^{\eps,m}_{t}(x)\in B_r(y_0); t<\tau^{\eps,x}_{B_m}\)\geq c_0,
$$
where $c_0$ is independent of $\eps$. This, together with \eqref{irr} and \eqref{est:lb:ab}, yields 
$$
\mathbb{P}(X_t(x)\in \overline{B_r(y_0)})\geq c_0.
$$
The proof is thus complete by the strong Feller property that $x\mapsto\mathbb{P}(X_t(x)\in B_r(y_0))$ is continuous.
\end{proof}

Now, we are in a position to give

\begin{proof}[Proof of Theorem \ref{Main}]
First of all, by \eqref{e:x1} and the standard Krylov-Bogoliubov method (see \cite[Theorem 11.7]{Da92}), there is an invariant probability 
measure $\mu$ associated with $(P_t)_{t\geq 0}$.
The uniqueness follows by the strong Feller property and irreducibility. The exponential convergence (i) and (ii) follow by \eqref{e:x}, Theorem \ref{Th11} and Lemmas \ref{33} and \ref{34}.
\end{proof}

\section{Application to heavy-tailed sampling}

In this section we introduce an application of Theorem \ref{Main} to the heavy-tailed sampling. 
Let $\mu(\dif x)=\e^{-U(x)}\dif x/\int_{\mR^d}\e^{-U(x)}\dif x$, where $U:\mR^d\to \mR$ is a continuous function.
Suppose that there are $\beta, C_0>0$ such that for all $|x|\geq 1$,
\begin{align}\label{UU0}
U(x)\geq (d+\beta)\log |x|-C_0.
\end{align}
The above assumption means that $\e^{-U(x)}$ has a polynomial decay rate as $|x|\to\infty$, which is usually called 
the heavy-tailed distribution compared with the exponential or light tailed distribution.
Below we want to find an  ergodic SDE so that the law of  the solution $X_t$ exponentially converges to $\mu$ in some sense 
as $t\to\infty$. Thus, one can sample $\mu$ theoritically from $X_t$ when $t$ is  large.

Fix $\alpha\in(0,2)$. To construct an ergodic SDE driven by $\alpha$-stable process, we introduce a vector field $B: \mR^d\to\mR^d$ by
$$
B(x):=c_{d,\alpha}\int_{\mR^d}y[\varrho_\alpha(x+y)-\varrho_\alpha(x-y)]|y|^{-d-\alpha}\dif y,
$$
where $c_{d,\alpha}=2^\alpha\Gamma(\frac{d+\alpha}{2})/\Gamma(\frac d2)\Gamma(\frac{2-\alpha}2)$ and 
$$
\varrho_\alpha(x):=(1+|x|^2)^{-\frac{d+\alpha}{2}}.
$$
The vector field $B$ enjoys the following important property.
\bl\label{Le41}
\begin{enumerate}[(i)]
\item $B\in C^\infty_0$ satisfies that for some $\kappa_0=\kappa_0(d,\alpha)>0$,
\begin{align}\label{HC2}
|B(x)|\leq \kappa_0|x|^{1-d-\alpha},\ \ |\nabla B(x)|\leq \kappa_0|x|^{-d-\alpha}.
\end{align}
\item There is a constant $\kappa_1>0$ only depending on $d,\alpha$ such that for all $|x|>1$,
\begin{align}\label{HC3}
\<x, B(x)\>\leq -\kappa_1|x|^{2-d-\alpha}.
\end{align}
\item $\div B(x)=\Delta^{\frac\alpha 2}\varrho_\alpha(x)$.
\end{enumerate}
\el

\begin{proof}
(i) Since $\varrho_\alpha\in C^\infty_0(\mR^d)$ and for any $j\in\mN$,
$$
\|\nabla^j\varrho_\alpha/\varrho_\alpha\|_\infty<\infty,
$$
we clearly have $B\in C^\infty_0$. To show the bounds \eqref{HC2}, let $\chi:[0,\infty)\to[0,1]$ be a cutoff function with
$$
\chi(r)=1,\ \ r\in[0,\tfrac14],\ \ \chi(r)=0,\ \ r\in[0,\tfrac12].
$$
Define
$$
\varphi_{|x|}(y):= y|y|^{-d-\alpha}\chi\big(\tfrac{|y|}{|x|}\big),\ \ \wt\varphi_{|x|}(y):=y|y|^{-d-\alpha}\big(1-\chi\big(\tfrac{|y|}{|x|}\big)\big).
$$
For $j=0,1$, by definition we have
\begin{align*}
\nabla^j B(x)&=c_{d,\alpha}\int_{\mR^d}y\Big(\nabla^j\varrho_\alpha(x+y)-\nabla^j\varrho_\alpha(x-y)\Big)|y|^{-d-\alpha}\dif y=c_{d,\alpha}(I_1+I_2),
\end{align*}
where
\begin{align*}
I_1&:= \int_{\mR^d}\varphi_{|x|}(y)\Big(\nabla^j\varrho_\alpha(x+y)-\nabla^j\varrho_\alpha(x-y)\Big)\dif y,\\
I_2&:= \int_{\mR^d}\wt\varphi_{|x|}(y)\Big(\nabla^j\varrho_\alpha(x+y)-\nabla^j\varrho_\alpha(x-y)\Big)\dif y.
\end{align*}
For $I_1$,  we have
\begin{align*}
|I_1|&\leq \int_{\mR^d}\varphi_{|x|}(y)|y|\left(\int^1_{-1}|\nabla^{j+1}\varrho_\alpha(x+sy)|\dif s\right)\dif y\\
&\lesssim \int_{|y|\leq\frac{|x|}2}|y|^{2-d-\alpha}\left(\int^1_{-1}(1+|x+sy|)^{-(d+\alpha+1+j)}\dif s\right)\dif y\\
&\lesssim (1+|x|)^{-(d+\alpha+1+j)}\int_{|y|\leq\frac{|x|}2}|y|^{2-d-\alpha}\dif y\\
&\lesssim (1+|x|)^{-(d+\alpha+1+j)}|x|^{2-\alpha}\lesssim |x|^{1-j-d-2\alpha}.
\end{align*}
For $I_2$, by the change of variable, we have
\begin{align*}
I_2&=\int_{\mR^d}\Big(\wt\varphi_{|x|}(x-y)-\wt\varphi_{|x|}(x+y)\Big)(\nabla^j\varrho_\alpha)(y)\dif y\\
&=|x|^d\int_{\mR^d}\Big(\wt\varphi_{|x|}(|x|(\bar x-y))-\wt\varphi_{|x|}(|x|(\bar x+y))\Big)(\nabla^j\varrho_\alpha)(|x|y)\dif y\\
&=|x|^{1-\alpha}\int_{\mR^d}\Big(\wt\varphi_1(\bar x-y)-\wt\varphi_1(\bar x+y)\Big)(\nabla^j\varrho_\alpha)(|x|y)\dif y,
\end{align*}
where $\bar x=x/|x|$ and in the last step we have used
$$
\wt\varphi_{|x|}(|x|y)=|x|^{1-d-\alpha}y|y|^{-d-\alpha}\big(1-\chi\big(|y|\big)\big)=|x|^{1-d-\alpha}\wt\varphi_1(y).
$$
For $j=0$,  since $|\wt\varphi_1(y)|\leq|y|^{1-d-\alpha}\1_{|y|>1/4}\leq 4^{d+\alpha-1}$, it is easy to see that
\begin{align*}
|I_2|&\leq 2\cdot 4^{d+\alpha-1} |x|^{1-\alpha}\int_{\mR^d}|(1+|x||y|)^{-(d+\alpha)}\dif y\lesssim |x|^{1-\alpha-d}.
\end{align*}
For $j=1$, since $\|\div_y\wt\varphi_1\|_\infty<\infty$, by the integration by parts, we also have
\begin{align*}
|I_2|&=|x|^{-\alpha}\left|\int_{\mR^d}\Big(\wt\varphi_1(\bar x-y)-\wt\varphi_1(\bar x+y)\Big)\nabla_y\varrho_\alpha(|x|y)\dif y\right|\\
&=|x|^{-\alpha}\left|\int_{\mR^d}\Big(\div_y\wt\varphi_1(\bar x-y)-\div_y\wt\varphi_1(\bar x+y)\Big)\varrho_\alpha(|x|y)\dif y\right|\\
&\leq 2\|\div_y\wt\varphi_1\|_\infty |x|^{-\alpha}\int_{\mR^d}|(1+|x||y|)^{-(d+\alpha)}\dif y\lesssim |x|^{-\alpha-d}.
\end{align*}
Combining the above estimates we obtain \eqref{HC2}.

(ii) By definition and the symmetry, we have
\begin{align*}
-\<x,B(x)\>&= c_{d,\alpha}\int_{\mR^d}\<x,y\>[\varrho_\alpha(x-y)-\varrho_\alpha(x+y)]|y|^{-d-\alpha}\dif y\\
&= 2c_{d,\alpha}\int_{\<x,y\>\geq 0}\<x,y\>[\varrho_\alpha(x-y)-\varrho_\alpha(x+y)]|y|^{-d-\alpha}\dif y.
\end{align*}
For $|x|\geq 1$ and $\<x,y\>\geq 0$, by the mean-valued formula, we have
\begin{align*}
&\varrho_\alpha(x-y)-\varrho_\alpha(x+y)
=(1+|x-y|^2)^{-\frac{d+\alpha}2}-(1+|x+y|^2)^{-\frac{d+\alpha}2}\\
&\quad=2(d+\alpha)\<x,y\>\int^1_0\big(1+s(|x-y|^2-|x+y|^2)+|x+y|^2\big)^{-\frac{d+\alpha}2-1}\dif s\\
&\quad=2(d+\alpha)\<x,y\>\int^1_0\big(1-4s\<x,y\>+|x+y|^2\big)^{-\frac{d+\alpha}2-1}\dif s\\
&\quad=2(d+\alpha)\<x,y\>\int^1_0\big(1+4s\<x,y\>+|x-y|^2\big)^{-\frac{d+\alpha}2-1}\dif s \\
&\quad\geq 2(d+\alpha)\<x,y\>\int^{|x|^{-2}}_0\big(1+4s\<x,y\>+|x-y|^2\big)^{-\frac{d+\alpha}2-1}\dif s \\
&\quad\geq 2(d+\alpha)\<x,y\>\big(1+4|x|^{-2}\<x,y\>+|x-y|^2\big)^{-\frac{d+\alpha}2-1}|x|^{-2}. 
\end{align*}
Hence,
\begin{align*}
-\<x,B(x)\>&\geq\frac{4(d+\alpha)c_{d,\alpha}}{ |x|^2}\int_{\<x,y\>\geq 0}\<x,y\>^2\big(1 +4|x|^{-2}\<x,y\>+|x-y|^2\big)^{-\frac{d+\alpha}2-1}|y|^{-d-\alpha}\dif y\\
&\geq\frac{4(d+\alpha)c_{d,\alpha}}{|x|^2}\int_{\<x,y\>\geq 0,|x-y|\leq 1}\<x,y\>^2\big(1 +4|x|^{-2}\<x,y\>+|x-y|^2\big)^{-\frac{d+\alpha}2-1}|y|^{-d-\alpha}\dif y.
\end{align*}
Since for $|x|\geq 1$,
$$
|x-y|\leq 1\Rightarrow \tfrac{|y|^2}{2}\leq\<x,y\>\leq |x|^2+|x|,
$$
we further have
\begin{align*}
-\<x,B(x)\>&\geq\frac{4(d+\alpha)c_{d,\alpha}}{|x|^2}\int_{|x-y|\leq 1}\<x,y\>^2\big(2+4(1+|x|^{-1})\big)^{-\frac{d+\alpha}2-1}|y|^{-d-\alpha}\dif y\\
&\geq\frac{4(d+\alpha)c_{d,\alpha}}{|x|^2 10^{\frac{d+\alpha}2+1}}\int_{|x-y|\leq 1}\<x,y\>^2|y|^{-d-\alpha}\dif y\geq \kappa_1|x|^{2-d-\alpha},
\end{align*}
where $\kappa_1>1$ only depends on $d,\alpha$.  Thus we obtain \eqref{HC3}.

(iii) By definition and the integration by parts, we have
\begin{align*}
\div B(x)&= c_{d,\alpha}\int_{\mR^d}\<y,\nabla_x[\varrho_\alpha(x+y)-\varrho_\alpha(x-y)]\>|y|^{-d-\alpha}\dif y\\
&= c_{d,\alpha}\int_{\mR^d}\<y,\nabla_y[\varrho_\alpha(x+y)+\varrho_\alpha(x-y)-2\varrho_\alpha(x)]\>|y|^{-d-\alpha}\dif y\\
&=- c_{d,\alpha}\int_{\mR^d}\div(y|y|^{-d-\alpha})[\varrho_\alpha(x+y)+\varrho_\alpha(x-y)-2\varrho_\alpha(x)]\dif y.
\end{align*} 
Since for $y\not=0$,
$$
\div(y|y|^{-d-\alpha})=d|y|^{-d-\alpha}+\<y,\nabla|y|^{-d-\alpha}\>=-\alpha|y|^{-d-\alpha},
$$
we have by \eqref{CL11} and $c_{d,\alpha}=2^\alpha\Gamma(\frac{d+\alpha}{2})/\Gamma(\frac d2)\Gamma(\frac{2-\alpha}2)$,
$$
\div B(x)=\alpha c_{d,\alpha}\int_{\mR^d}\frac{[\varrho_\alpha(x+y)+\varrho_\alpha(x-y)-2\varrho_\alpha(x)]}{|y|^{d+\alpha}}\dif y=\Delta^{\frac\alpha2}\varrho_\alpha(x).
$$
The proof is complete.
\end{proof}

\begin{figure}[h]
\begin{center}
\includegraphics[width=150mm]{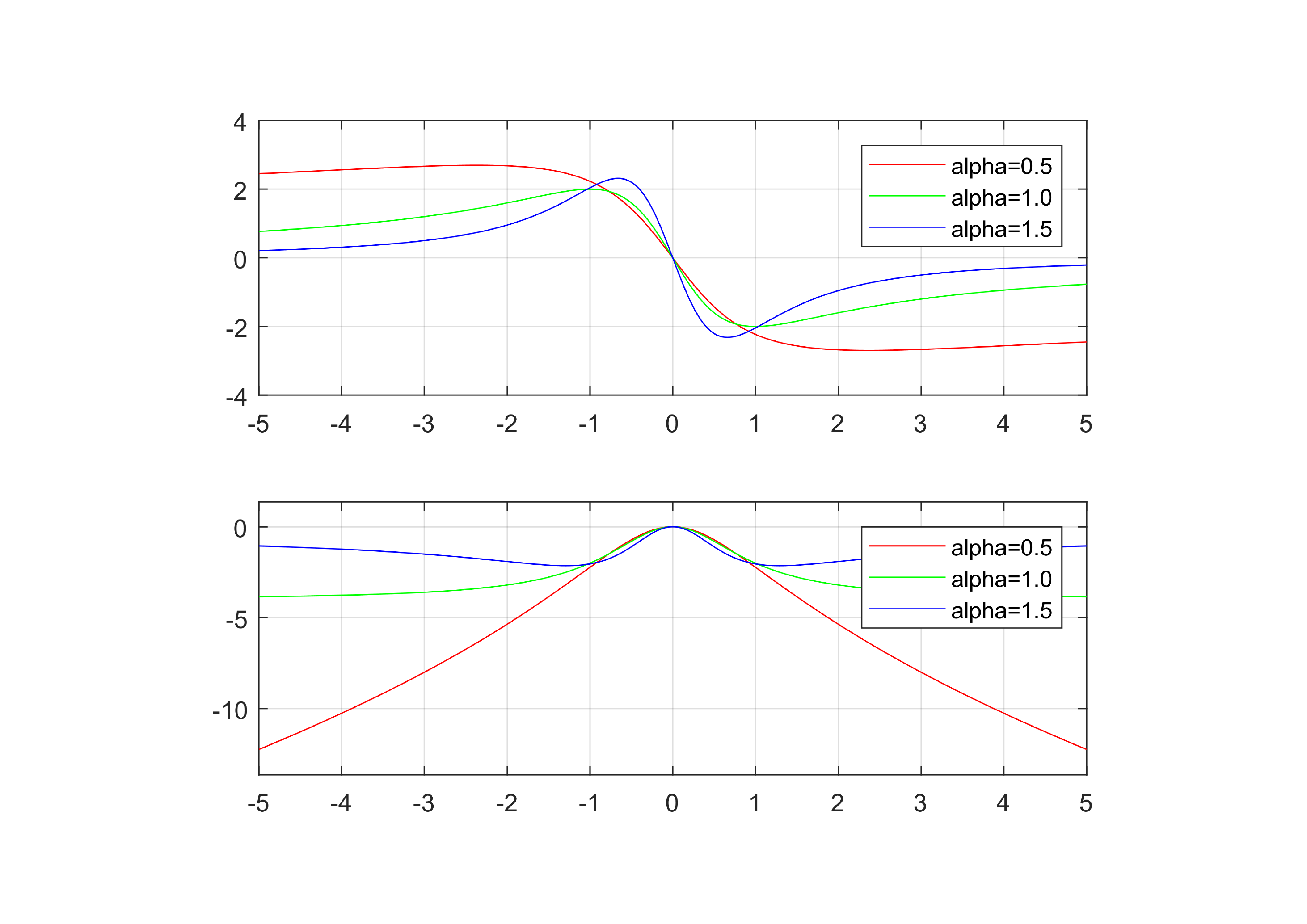}
\end{center}
\caption{Pictures of $B(x)$ and $\<x, B(x)\>$ for $d=1$.}
\label{FIGURE}
\end{figure}

Now we take $\sigma$ and $b$ in SDE \eqref{sde:main} as follows:
\begin{align}\label{Coe}
\sigma(x):=(\varrho_\alpha(x)\e^{U(x)})^{\frac1\alpha}\mI,\ \ b(x):=B(x)\e^{U(x)}.
\end{align}
We have the following main result of this section.
\bt
Suppose that $U$ is locally Lipschitz continuous and satisfies \eqref{UU0}. Then with the above choices of $\sigma$ and $b$,
$\mu(\dif x)=\e^{-U(x)}\dif x/\int_{\mR^d}\e^{-U(x)}\dif x$ 
is the unique invariant probability measure of SDE \eqref{sde:main} and the conclusions in Theorem \ref{Main} hold with $r=\beta-\alpha$.
\et
\begin{proof}
Let $\sigma$ and $b$ be defined by \eqref{Coe}. By \eqref{CL1} and the change of variable, we  can write
\begin{align*}
\mathcal{L}_\sigma f(x)&=\frac{\alpha 2^\alpha\Gamma(\frac{d+\alpha}{2})}{\Gamma(\frac d2)\Gamma(\frac{2-\alpha}2)}\int_{\mathbb{R}^d}[f(x+\sigma(x)z)+f(x-\sigma(x)z)-2f(x)]\dif z/|z|^{d+\alpha}
=\varrho_\alpha(x)\e^{U(x)}\Delta^{\frac\alpha 2}f(x).
\end{align*}
Thus,
\begin{align}\label{Gen1}
\sL f(x)=\mathcal{L}_\sigma f(x)+b\cdot\nabla f(x)=\varrho_\alpha(x)\e^{U(x)}\Delta^{\frac\alpha 2}f(x)+B(x)\e^{U(x)}\cdot\nabla f(x),
\end{align}
and by (i)  of Lemma \ref{Le41},
$$
(\sL^* \e^{-U})(x)=\Delta^{\frac\alpha 2}\varrho_\alpha(x)-\div B(x)=0,
$$
which implies that $\mu(\dif x)=\e^{-U(x)}\dif x/\int_{\mR^d}\e^{-U(x)}\dif x$ is an invariant probability measure of SDE \eqref{sde:main}. 

To check {\bf (H$_{\rm loc}$)} and {\bf (H$^{r,\KK}_{\rm glo}$)}, by Remark \ref{Re8}, it suffices to verify \eqref{BB1} for $\eps_0=0$.
By \eqref{HC3} and \eqref{UU0}, for any $|x|\geq (2q/\kappa_1)^{1/\alpha}\vee 1$, we have
\begin{align*}
\<x,b(x)\>+q\|\sigma(x)\|^\alpha|x|^{2-\alpha}
&=\big[\<x,B(x)\>+\KK\varrho_\alpha(x)|x|^{2-\alpha}\big]\e^{U(x)})\\
&\leq \big[-\kappa_1|x|^{2-d-\alpha}+\KK |x|^{2-d-2\alpha}\big]\e^{U(x)}\\
&\leq -\tfrac{\kappa_1}{2}|x|^{2-d-\alpha}\e^{U(x)}\leq-\tfrac{\kappa_1}{2\e^{C_0}}|x|^{2+\beta-\alpha}.
\end{align*}
So, Theorem \ref{Main} is applicable. The proof is complete.
\end{proof}

\br\rm
Suppose that $U(x)=-\ln\varrho_\alpha(x)$. Note that by \eqref{HC2},
$$
\|\nabla(B/\varrho_\alpha)\|_\infty<\infty.
$$
Let $X_t$ be the unique solution of the following SDE:
\begin{align}\label{SDE8}
\dif X_t=(B/\rho_\alpha)(X_t)\dif t+\dif L^\alpha_t,\ \ X_0=x.
\end{align}
Then $\mu_0(\dif x)=\varrho_\alpha(x)\dif x/\int\varrho_\alpha(x)\dif x$ is the unique invariant probability measure of $X_t$.
It should be noticed that the distributional density $p_\alpha(x)$ of $L^\alpha_1$ is comparable with $\varrho_\alpha(x)$  (cf. \cite{XZ20}), i.e., there is a constant 
$\kappa=\kappa(d,\alpha)\geq 1$ such that for all $x\in\mR^d$,
$$
\kappa^{-1}\varrho_\alpha(x)\leq p_\alpha(x)\leq \kappa\varrho_\alpha(x).
$$
Let $\sL_0$ be the infinitesimal generator of SDE \eqref{SDE8}, i.e.,
$$
\sL_0=\Delta^{\frac{\alpha}{2}}+B/\varrho_\alpha\cdot\nabla.
$$
Then by \eqref{Gen1}, one sees that
$$
\sL=(\varrho_\alpha\e^U)\sL_0.
$$ 
In particular, the solution of SDE \eqref{sde:main} with coefficients \eqref{Coe} is just a time change of SDE \eqref{SDE8} (see \cite[Section 1.15.2]{BGL14}).
\er
\br\rm
The locally Lipschitz assumption on $U$ can be relaxed as locally $\gamma$-H\"older continuity with $\gamma\in((1-\alpha)^+,1)$. 
If it is so, we need to check \eqref{BB1} for some $\eps_0>0$.
\er
\section{Appendix}

In this appendix we sketch the proof  of weak convergence \eqref{DD9}. First of all, by \eqref{BV1} and \eqref{e:x1}, there is a constant $C>0$ such that for all $\eps\in(0,1)$ and
any $x\in\mR^d$ and $T>0$,
\begin{align}\label{e:x11}
\left[\mathbb{E}\left(\sup_{s\in[0,T]}V_p(X^\eps_s(x))^{\frac12}\right)\right]^2\lesssim_C V_p(x)+T.
\end{align}
For $\theta\in(0,T)$, let $\eta,\eta'$ be two stopping times with $0\leq\eta\leq\eta'+\theta\leq T$.
For any $m\in\mN$, we have
\begin{align*}
\mP\left(|X^\eps_\eta-X^\eps_{\eta'}|\geq \delta\right)
&\leq\mP\left(|X^\eps_\eta-X^\eps_{\eta'}|\geq \delta; T<\tau^{\eps,x}_{B_m}\right)+\mP\left(T\geq \tau^{\eps,x}_{B_m}\right)\\
&=\mP\left(|X^{\eps,m}_\eta-X^{\eps,m}_{\eta'}|\geq \delta; T<\tau^{\eps,x}_{B_m}\right)+\mP\left(T\geq \tau^{\eps,x}_{B_m}\right)\\
&\leq\mP\left(|X^{\eps,m}_\eta-X^{\eps,m}_{\eta'}|\geq \delta\right)+2\mP\left(T\geq \tau^{\eps,x}_{B_m}\right).
\end{align*}
By SDE \eqref{sde:app0} and \eqref{AA0}, \eqref{AA1}, it is by now standard to derive that for fixed $m\in\mN$,
\begin{align*}
\lim_{\theta\downarrow 0}\sup_{\eps\in(0,1)}\sup_{0\leq\eta\leq\eta'+\theta\leq T}\mP\left(|X^{\eps,m}_\eta-X^{\eps,m}_{\eta'}|\geq \delta\right)=0,
\end{align*}
which together with \eqref{BV2} yields that for any $T,\delta>0$,
$$
\lim_{\theta\downarrow 0}\sup_{\eps\in(0,1)}\sup_{0\leq\eta\leq\eta'+\theta\leq T}\mP\left(|X^{\eps}_\eta-X^{\eps}_{\eta'}|\geq \delta\right)=0.
$$
Thus, by Aldous' criterion (see \cite[p.356, Theorem 4.5]{Jac03}), the law $\mQ^\eps$ of $(X^\eps_\cdot, L^\alpha_\cdot)$,  $\eps\in(0,1)$ in $\mD\times\mD$ is tight. 
Let $\mQ$ be any accumulation point of $(\mQ^\eps)_{\eps\in(0,1)}$. Without loss of generality, we assume that for some subsequence $\eps_k\to 0$,
$(\mQ^k)_{k\in\mN}:=(\mQ^{\eps_k})_{k\in\mN}$
weakly converges to $\mQ$ as $k\to\infty$. By Skorokhod's representation theorem, 
there is a probability space $(\wt\Omega,\wt\sF,\wt\mP)$ and $\mD\times\mD$-valued processes $(\wt X^k,\wt L^k)$ and $(\wt X,\wt L)$ such that
$$
(\wt X^k,\wt L^k)\to (\wt X,\wt L)\  \mbox{ in $\mD\times\mD$},\ \ \wt\mP-a.s.,
$$
and
$$
\wt\mP\circ(\wt X^k,\wt L^k)^{-1}=\mQ^k,\ \ \wt\mP\circ(\wt X,\wt L)^{-1}=\mQ.
$$
Moreover, $\wt L^k$ and $\wt L$ are still $\alpha$-stable L\'evy processes, and
$$
\wt X_t^k=x+\int^t_0b_{\eps_k}(\wt X^k_s)\dif s+\int^t_0\sigma_{\eps_k}(\wt X^k_s)\dif \wt L^k_s.
$$
By \cite[Theorem 6.22, p.383]{Jac03} and taking limits, one sees that 
$$
\wt X_t=x+\int^t_0b(\wt X_s)\dif s+\int^t_0\sigma(\wt X_s)\dif \wt L_s,
$$
and $(\wt X,\wt L)$ is a weak solution of SDE \eqref{sde:main}. Finally, by the weak uniqueness
of \cite{CZZ21} we obtain the weak convergence \eqref{DD9}.

\medskip

{\bf Acknowledgement:} The authors would like to thank   Jian Wang for useful conversation about the heavy-tailed sampling.

\end{document}